\documentclass{article}
\usepackage{graphicx}
\usepackage{amsmath}
\usepackage{amsfonts}
\usepackage{amssymb}

\newtheorem{theorem}{Theorem}

\newtheorem{definition}[theorem]{Definition}

\newtheorem{lemma}[theorem]{Lemma}

\newtheorem{remark}[theorem]{Remark}
\newenvironment{proof}[1][Proof]{\textbf{#1.} }{\hfill$\Box$}
\newenvironment{proof*}[1][Proof of the Theorem]{\textbf{#1.} }{\hfill$\Box$}
\begin{document}
\title{Positive Solutions for the $p$-Laplacian with Dependence on the Gradient}
\author{\textbf{H. Bueno, G. Ercole, W.M. Ferreira and A. Zumpano}\thanks{The authors were supported in part by FAPEMIG and CNPq-Brazil.}\\\textit{{\small Departamento de Matem\'{a}tica}},\\
\textit{{\small Universidade Federal de Minas Gerais, Belo
Horizonte, 30.123-970, Brazil}}}
\date{}
\maketitle

\begin{abstract}
We prove a result of existence of positive solutions of the Dirichlet problem for $-\Delta_p u=\mathrm{w}(x)f(u,\nabla u)$ in a bounded domain $\Omega\subset\mathbb{R}^N$, where $\Delta_p$ is the $p$-Laplacian and $\mathrm{w}$ is a weight function. As in previous results by the authors, and in contrast with the hypotheses usually made, no asymptotic behavior is assumed on $f$, but simple geometric assumptions on a neighborhood of the first eigenvalue of the $p$-Laplacian operator. We start by solving the problem in a radial domain by applying the Schauder Fixed Point Theorem and this result is used to construct an ordered pair of sub- and super-solution, also valid for nonlinearities which are super-linear both at the origin and at $+\infty$. We apply our method to the Dirichlet problem $-\Delta_pu = \lambda u(x)^{q-1}(1+|\nabla u(x)|^p)$ in $\Omega$ and give examples of super-linear nonlinearities which are also handled by our method.
\end{abstract}

\section{Introduction}
It is usually said that the sub- and super-solution method does not handle problems which are superlinear at the origin. One of the main purposes of this paper is to prove that this is not true.

Furthermore, considering the eigenvalues of the natural operator defined by the equation, we believe that imposing asymptotic conditions on the nonlinearity masks one of the main problems in differential equations, which is to completely understand how the crossing of the eigenvalues by the nonlinearity determines the solutions of the equation. This paper presents a contribution in this direction.

For this, we consider the Dirichlet problem
\begin{equation}\label{prob1}
{\displaystyle \left\{\begin{array}{rcll}
-\Delta_pu &=& \omega(x)f(u,|\nabla u|) &\mbox{in} \ \Omega, \\
u& =& 0 &\mbox{on} \ \partial\Omega\\
\end{array}\right.}
\end{equation}
where $\Omega \subset \mathbb{R}^N$ ($N>1$) is a smooth, bounded domain, $\Delta_pu:=\operatorname{div}\left(|\nabla u|^{p-2}\nabla u\right)$ is the $p$-Laplacian, $1<p<\infty$, $\omega\colon \overline{\Omega} \rightarrow \mathbb{R}$ is a continuous, nonnegative function with isolated zeros (which we will call \textit{weight function}) and
the $C^1$-nonlinearity $f\colon[0,\infty) \times [0,\infty) \rightarrow [0,\infty)$ satisfies simple hypotheses.

We solve (\ref{prob1}) for a large class of functions $f$, including nonlinearities that are super-linear both at the origin and at $+\infty$. (The continuous function $\omega$ has isolated zeroes only to simplify the presentation. It is enough that $\omega(x_0)>0$ for some $x_0\in\Omega$.)

We apply our approach to prove the existence of positive solution for the problem
\begin{equation*}
{\displaystyle \left\{\begin{array}{rcll}
-\Delta_pu &=& \lambda u(x)^{q-1}(1+|\nabla u(x)|^p) &\mbox{in} \ \Omega, \\
u& =& 0 &\mbox{on} \ \partial\Omega,\\
\end{array}\right.}\end{equation*}
and, in the sequence, we give examples of super-linear nonlinearities (both at the origin and at $+\infty$) which are also handled by our method.

Adapting methods and techniques developed in \cite{GZ}, where the nonlinearity does not depend on
$\nabla u$, we start by obtaining a radial, positive solution $u$ for the problem
\begin{equation}\label{prob2}{\displaystyle \left\{\begin{array}{rcll}
-\Delta_pu &=& \omega_\rho(|x-x_0|)f(u,|\nabla u|) &\mbox{in} \ B_\rho, \\
u& =& 0 &\mbox{on} \ \partial B_\rho,\\
\end{array}\right.}
\end{equation}
where $B_\rho$ is the ball with radius $\rho$ centered at $x_0$ and $\omega_\rho$ a \textit{radial} weight function. For this, no asymptotic behavior on $f$ is assumed but, instead, simple local hypotheses on the nonlinearity $f$. (See hypotheses (H1) and (H2) in the sequence.) The application of the Schauder Fixed Point Theorem yields a radial solution $u$ of (\ref{prob2}).

To cope with the general case of a smooth, bounded domain $\Omega$, we apply the method of sub- and super-solution as developed in \cite{BMP} (see Theorem \ref{TBoc} in the next section), for the general problem
\begin{equation}{\displaystyle \left\{\begin{array}{rcll}
-\Delta_pu &=& f(x,u,|\nabla u|) &\mbox{in} \ \Omega, \\
u& =& 0 &\mbox{on} \ \partial \Omega,\\
\end{array}\right.}\label{general}
\end{equation}
where $f\colon\Omega\times\mathbb{R}\times\mathbb{R}^N\to\mathbb{R}$, $(x,u,v)\mapsto f(x,u,v)$ is a Carath\'{e}odory function (i.e., measurable in the $x$-variable and continuous in the $(u,v)$-variable) satisfying
\begin{enumerate}
\item [(H3)] $f(x,u,v)\leq C(|u|)(1+|v|^p)\quad (u,v)\in\mathbb{R}\times\mathbb{R}^N,\ { a.e. }\ x\in\Omega$ for some increasing function $C\colon[0,\infty]\to[0,\infty]$.
\end{enumerate}
We observe that (H3) is also found in papers that do not apply the sub- and super-solution method (see \cite{figubilla, RUIZ}), since they are also related to the regularity of a weak solution.

Besides the Bernstein-Nagumo type assumption (H3), our hypotheses on the nonlinearity $f$ are not usual in the literature: we assume that $f$ has a \textit{local} behavior satisfying hypotheses of the type
\begin{enumerate}
\item [(H1)] $0 \leq f(u,|v|) \leq k_1 M^{p-1}, \
\textrm{if } \ 0 \leq u \leq M, \ |v| \leq \gamma  M$,
\item [(H2)] $f(u,|v|) \geq k_2{\delta}^{p-1}, \
\textrm{if } \  0<\delta \leq u \leq M, \ |v| \leq \gamma  M$,
\end{enumerate}
where the constants $k_1$, $k_2$ and $\gamma$ are defined later on in this paper and $\delta,M$ are arbitrary. These constants depend strongly on the weight function $\omega$ and in some special cases (for example, $\omega\equiv 1$) can be explicitly calculated, see Subsection \ref{ck1k2}. In \cite{HGZ} was proved that $k_1<\lambda_1<k_2$, where $\lambda_1$ stands for the first eigenvalue of the $p$-Laplacian.

Hypotheses (H1) and (H2) are geometrically interpreted in Figure \ref{fig2}.

\begin{figure}[tbh]
\begin{center}
{\includegraphics[height=4.5cm]{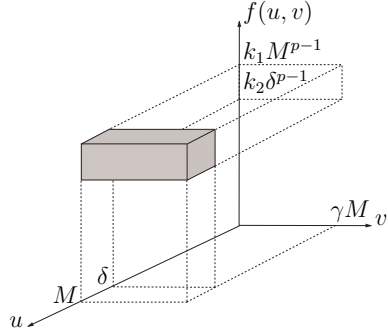}}
\end{center}
\caption{The graph of $f$ stays below $k_1 M^{p-1}$ in $[0,M] \times[0, \gamma M]$ and passes through the gray box.}\label{fig2}
\end{figure}

Hypotheses of this type will be considered in the scenarios of both the radial problem (\ref{prob2}) and the general problem (\ref{prob1}).

The radial problem (\ref{prob2}) is solved as an application of the Schauder Fixed Point Theorem. Therefore, a hypothesis like (H3) is not needed while studying this problem.

By considering a ball $B_\rho\subset\Omega$, radial symmetrization of the weight function $\omega$ permits us to consider a problem in the radial form in the sub-domain $B_\rho$, which has a solution $u_\rho$ as a consequence of our study of problem (\ref{prob2}). The chosen ball $B_\rho$ determines the value of the constants $k_2$ and $\gamma$ needed to solve (\ref{prob1}) and the radial solution $u_\rho\colon\overline{B_\rho}\to\mathbb{R}$ produces a sub-solution $\underline{u}$ of problem (\ref{prob1}), when we consider the extension $\underline{u}$ of $u_\rho$ defined by $\underline{u}(x)=0$, if $x\in \Omega\setminus B_\rho$. So, the solution of (\ref{prob2}) gives rise to a sub-solution of problem (\ref{prob1}).

In order to obtain a super-solution $\overline{u}$ for problem (\ref{prob1}), we impose that
\begin{equation}\label{quointrod}
\frac{\|\nabla \overline{u}\|_\infty}{\|\overline{u}\|_\infty}\leq\gamma,\end{equation}
an estimate that is suggested  by hypothesis (H1). So, we look for a super-solution of (\ref{prob1}) satisfying (\ref{quointrod}) and defined in a (smooth, bounded) domain $\Omega_2\supset \Omega$, which determines the value of the constant $k_1$ needed to solve (\ref{prob1}).

In the abstract setting of the domain $\Omega_2$, the super-solution $\overline{u}$ turns out to be a multiple of the solution $\phi_{\Omega_2}$ of the problem
\begin{equation}\label{pw}{\displaystyle \left\{\begin{array}{rcll}
-\Delta_p\phi_{\Omega_2} &=& \|\omega\|_\infty &\mbox{in} \ \Omega_2, \\
\phi_{\Omega_2}& =& 0 &\mbox{on} \ \partial \Omega_2,\\
\end{array}\right.}
\end{equation}
if $\phi_{\Omega_2}$ satisfies (\ref{quointrod}). In this setting, the existence of a positive solution for (\ref{prob1}) is stated in Section \ref{GD}.

We give two applications of this result for abstract nonlinearities in Section \ref{App}. In the first application, given in Subsection \ref{SSRadial}, we choose a ball $\Omega_2=B_R$ such that $\Omega\subset B_R$ and prove that, if $R$ is large enough, it is possible to obtain a super-solution for (\ref{prob1}) satisfying (\ref{quointrod}).

The second application is more demanding and considers the case where $\Omega_2$ is the domain $\Omega$ itself. In order to control the quotient (\ref{quointrod}), we assume $\Omega$ to be convex and apply a maximum result proved in Payne and Philippin \cite{PAYNE AND PHILIPPIN}. But, in some cases, if we choose $\Omega_2$ as the convex hull of $\Omega$, the same method produces a better solution than considering $\Omega\subset B_R$ for $R$ large enough.

In Section \ref{Ex} we consider concrete nonlinearities $f$. For $\lambda\in (0,\lambda^*]$ (where $\lambda^*$ is a positive constant), we apply the technique of Subsection \ref{SSRadial} and prove the existence of a positive solution for the problem
\begin{equation*}
{\displaystyle \left\{\begin{array}{rcll}
-\Delta_pu &=& f_\lambda(u, |\nabla u|) &\mbox{in} \ \Omega, \\
u& =& 0 &\mbox{on} \ \partial\Omega,\\
\end{array}\right.}\end{equation*}
where $f_\lambda(u, |\nabla u|):=\lambda u(x)^{q-1}(1+|\nabla u(x)|^p)$, $1<p<q$.

In the sequence, simple modifications in $f_\lambda(u, |\nabla u|)$ will produce new nonlinearities with are also handled by our method, including nonlinearities with superlinear behavior both at the origin and at $+\infty$.

\section{Comments on our method}
In general, variational techniques are not suitable to handle (\ref{prob1}); therefore, a combination of topological methods (as fixed-point or degree results) and blow-up arguments are usually applied to solve it (\cite{figubilla,iturriaga,RUIZ}). In the Laplacian case $p=2$, a combination of the mountain pass geometry with the contraction lemma was first used in \cite{DJAIRO}: an iteration process is constructed by freezing the gradient in each iteration and (variationally) solving the resulting problem. Then, Lipschitz hypotheses in the variables $u$ and $v$ are made on $f(x,u,v)$ in order to guarantee the convergence in $W_0^{1,2}(\Omega)$ of the obtained sequence of solutions. The same approach for the $p$-Laplacian with $p>2$ is not directly adaptable, since the natural extension of the Lipschitz conditions used to obtain the convergence of the iterated solutions yield a H\"{o}lder function $f$ with exponent greater than $1$ in variables $u$ and $v$.

When the nonlinearity $f$ does not depend on the gradient, the same technique was generalized in \cite{HGZ} to a smooth, bounded domain $\Omega\subset\mathbb{R}^N$. However, if $\Omega$ is not a ball, the dependence of $f$ on $|\nabla u|$ demands controlling  $\|\nabla u\|_\infty$ in $\Omega$ and complicates the application of Schauder's Fixed Point Theorem.

In \cite{figubilla}, the authors discuss the existence of positive solutions for
quasilinear elliptic equations in annular domains in $\mathbb{R}^N$ and, in particular,
the radial Dirichlet problem in annulus. (Therefore, the problem is transformed into an ordinary differential equation.)
In that paper, $f$ satisfies a super-linear condition at $0$ and a local super-linear condition at $+\infty$. The growth of the nonlinearity $f$ in relation to the gradient is controlled by a hypothesis similar to (H3) and a local homogeneity type condition in the second variable, hypothesis related to the behavior of $f$ near a point $(r,s,t)$ such that $f(r,s,t)=0$, where $r=|x|$. The existence of solutions is guaranteed by applying the Krasnosel'skii Fixed Point Theorem for mappings defined in cones.

The majority of papers dealing with the sub- and super-solution method with nonlinearities depending on the gradient are focused in the improvement of the method itself, that is, the papers aim to weaken the hypotheses of the method (\cite{KURA,LEON}). One exception is the paper of Grenon \cite{Grenon}, where problem (\ref{general}) is solved by analyzing two symmetrized problems. From the existence of two nontrivial super-solutions $V_1$ and $V_2$ for those problems follows the existence of a super-solution $U_1$ and a sub-solution $U_2$ for $(\ref{general})$, with $U_2 \leq U_1$.

More recently, the sub- and super-solution method has been applied to some instances of problem (\ref{prob1}): the Laplacian case $p=2$ and $\Omega=\mathbb{R}^N$. In \cite{GHERGU RADULESCU1,JOSE VALDO1}, the dependence on the gradient occurs by means of a convection term $|\nabla u|^\alpha$ in the nonlinearity $f$ and the authors look for ground state solutions. These are obtained as limits of a monotone sequence of auxiliary problems defined in nested subdomains of $\mathbb{R}^n$, which are bounded and smooth.



\section{Preliminaries}
In this section we recall some basic results in the theory of the $p$-Laplacian equation with Dirichlet boundary condition and present technical results that will be used in the rest of the paper. Let $D$ be a bounded, smooth domain in $\mathbb{R}^N$, $N>1$.
\begin{definition}\label{defsubsupsol}Let $f\colon D\times\mathbb{R}\times\mathbb{R}^N$ be a Carath\'{e}odory function. A function $u \in W^{1,p}(D)\cap\,L^{\infty}(D)$ is called a solution $($sub-solution, super-solution$)$ of
\begin{equation}\label{p1}
{\displaystyle \left\{\begin{array}{rcll}
-\Delta_pu &=& f(x,u,\nabla u) &\mbox{in} \ D, \\
u& =& 0 &\mbox{on} \ \partial D,\\
\end{array}\right.}
\end{equation}
if
\begin{equation*}
\displaystyle{\int_{D}|\nabla u|^{p-2}\nabla u\cdot\nabla\phi\,dx=
\int_{D}f(x,u,\nabla u)\phi\,dx \ \ (\leq 0,\ \geq 0),}
\end{equation*}
for all $\phi \in C_0^{\infty}(D)$, $(\phi\geq 0$ in $D$ in the case of a sub- or super-solution$)$ and
$$u=0 \ (\leq 0, \ \geq 0) \ \ \textrm{on} \ \ \partial D.$$

A pair $(\underline{u},\overline{u})$ of sub- and super-solution is \textrm{ordered} if $\underline{u} \leq
\overline{u}$ a.e.\goodbreak
\end{definition}

The hypothesis (H3) implies that
\begin{eqnarray*}
\int_{D}\big|f(x,u,\nabla u)\phi\big|dx &\leq& C(\|u\|_\infty)\int_{D}(1+|\nabla u|^p)|\phi|\, dx\\
&=& C(\|u\|_\infty)\left(\int_{D}|\phi|\, dx+\int_{D}|\nabla u|^p|\phi|\,dx\right)< \infty,
\end{eqnarray*}
since $\phi \in C_0^{\infty}$ and $u \in W^{1,p}( D) \cap\, L^{\infty}(D)$.\vspace{.3cm}

We now state, in a version adapted to our paper, the result that give basis to the method of sub- and super-solution for equations like (\ref{p1}). The existence part is a consequence of Theorem 2.1 of Boccardo, Murat and Puel \cite{BMP}. The regularity part follows from the estimates of Lieberman \cite{LIEBERMAN}, while the minimal and maximal solutions are consequence of Zorn's Lemma, as proved in Cuesta Leon \cite{mabel}:
\begin{theorem}\label{TBoc} Let $f\colon\Omega\times\mathbb{R}\times\mathbb{R}^N\to\mathbb{R}$ be a Carath\'{e}odory function satisfying $({\rm H}3)$. Suppose that $(\underline{u},\overline{u})$ is an ordered pair of sub- and super-solution for the problem $(\ref{p1})$.

Then, there exists a minimal solution $u$ and a maximal solution $v$ of $(\ref{p1})$, both in $C^{1,\tau}\left(\overline{\Omega}\right)$ $(0<\tau<1)$, such that $\underline{u}\leq u\leq v\leq\overline{u}$.
\end{theorem}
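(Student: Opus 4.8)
The plan is to assemble the conclusion from three ingredients, all essentially available in the cited literature, after first reducing to a setting with a fixed growth constant. First I would set $K:=\max\{\|\underline u\|_\infty,\|\overline u\|_\infty\}$ and observe that any measurable $w$ with $\underline u\le w\le\overline u$ satisfies $\|w\|_\infty\le K$, so by (H3), $|f(x,w,\nabla w)|\le C(K)\,(1+|\nabla w|^p)$ with $C(K)$ a \emph{fixed} constant. This is precisely the structure of the natural-growth problem treated in Theorem~2.1 of Boccardo--Murat--Puel \cite{BMP}, which I would invoke to obtain a weak solution $u_0\in W_0^{1,p}(\Omega)\cap L^\infty(\Omega)$ of (\ref{p1}) with $\underline u\le u_0\le\overline u$ a.e.; the computation displayed just before the statement ensures that the integral identity in Definition~\ref{defsubsupsol} is meaningful for such functions.

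Next I would address regularity. For \emph{any} solution $w$ of (\ref{p1}) lying in $[\underline u,\overline u]$ we have $\|w\|_\infty\le K$, so the right-hand side obeys the natural (quadratic-type) growth $|f(x,w,\nabla w)|\le C(K)(1+|\nabla w|^p)$. The a priori gradient estimates for bounded solutions of $p$-Laplacian equations with this kind of right-hand side then give $\nabla w\in L^\infty(\Omega)$, with a bound depending only on $K$, $C(K)$, $p$, $N$ and $\Omega$, hence uniform over all such $w$. Once $\nabla w$ is bounded, $x\mapsto f(x,w(x),\nabla w(x))$ lies in $L^\infty(\Omega)$, and the boundary estimates of Lieberman \cite{LIEBERMAN} upgrade $w$ to $C^{1,\tau}(\overline\Omega)$ for some $\tau\in(0,1)$, again with norm controlled by the same data.

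Finally, for the extremal solutions I would argue by Zorn's Lemma, following Cuesta--Le\'on \cite{mabel}. Let $\mathcal S$ be the set of solutions $w$ of (\ref{p1}) with $\underline u\le w\le\overline u$, ordered by $\le$ a.e.; it is nonempty by the first step. I would check two facts. (i) $\mathcal S$ is downward directed: for $w_1,w_2\in\mathcal S$, a standard truncation argument shows $\min(w_1,w_2)$ is a super-solution of (\ref{p1}) sitting above $\underline u$, so the first step applied to the ordered pair $(\underline u,\min(w_1,w_2))$ produces an element of $\mathcal S$ below both $w_i$. (ii) Every chain in $\mathcal S$ has a lower bound in $\mathcal S$: by the regularity step the chain is relatively compact in $C^1(\overline\Omega)$, so it admits a decreasing cofinal sequence $(w_n)$ converging in $C^1(\overline\Omega)$ to its pointwise infimum $w_\ast$; passing to the limit in the weak formulation shows $w_\ast\in\mathcal S$. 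By Zorn's Lemma $\mathcal S$ has a minimal element, and by (i) this minimal element is in fact the least element $u$ of $\mathcal S$. The maximal solution $v$ is obtained symmetrically (using that $\max(w_1,w_2)$ is a sub-solution and increasing cofinal sequences), and by construction $\underline u\le u\le v\le\overline u$, both in $C^{1,\tau}(\overline\Omega)$.

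The step I expect to be the real obstacle — and the one deserving the most care — is the a priori $L^\infty$ bound on $\nabla w$ in the regularity step: the gradient growth allowed by (H3) is exactly the borderline (quadratic) case, so this is genuine PDE input, to be quoted from the gradient-estimate theory for such equations (in the spirit of DiBenedetto and Tolksdorf, refined up to the boundary by Lieberman) rather than obtained softly. Once that uniform $C^{1,\tau}$ bound is in hand, the existence-in-between from \cite{BMP} and the lattice-plus-compactness manipulations for the extremal solutions are routine, the only bookkeeping being the extraction of countable cofinal sequences so that the weak formulation passes to the limit.
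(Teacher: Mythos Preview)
Your proposal is correct and follows essentially the same route as the paper: the paper does not give a detailed proof of Theorem~\ref{TBoc} but simply attributes the existence part to Theorem~2.1 of Boccardo--Murat--Puel \cite{BMP}, the regularity to Lieberman's estimates \cite{LIEBERMAN}, and the extremal solutions to the Zorn's Lemma argument of Cuesta~Le\'on \cite{mabel}. Your write-up fleshes out precisely this three-part scheme (including the reduction to a fixed growth constant $C(K)$ and the lattice/compactness bookkeeping for the extremals), and you correctly flag the a~priori $L^\infty$ gradient bound under the borderline $|\nabla u|^p$ growth as the one step requiring genuine PDE input rather than soft arguments.
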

(By \emph{minimal} and \emph{maximal} solution of $(\ref{p1})$ we mean that, if $w$ is a solution of this problem and $\underline{u}\leq w\leq \overline{u}$, then $u\leq w\leq v$.)

\begin{remark}\label{regularity}\rm It is well-known that, for each $h \in L^\infty(D)$, the problem
\begin{equation}\label{hdiric}
{\displaystyle \left\{\begin{array}{rcll}
-\Delta_pu &=& h &\mbox{in} \ D, \\
u& =& 0 &\mbox{on} \ \partial D\\
\end{array}\right.}
\end{equation}
has a unique weak solution $u \in W_0^{1,p}(D)$, which belongs to $C^{1,\tau}(\overline{D})$ for some $0 <\tau <1$.
\end{remark}

If we assume that $f\colon \overline{D}\times\mathbb{R}\times\mathbb{R}^N\to\mathbb{R}$ is continuous, then is compact and continuous the operator $T\colon C^1\left(\overline{D}\right)\to C^{1}\left(\overline{D}\right)$ defined by $Tu=v$, where $v$ is the solution of
\begin{equation}\label{p}
{\displaystyle \left\{\begin{array}{rcll}
-\Delta_pv&=& f(x,u,\nabla u) &\mbox{in} \ D, \\
v& =& 0 &\mbox{on} \ \partial D.\\
\end{array}\right.}
\end{equation}
(The space $C^1\left(\overline{D}\right)$ is the Banach space of continuously differentiable functions endowed with the norm $\|u\|_1=\|u\|_\infty+\|\nabla u\|_\infty$.)


\begin{theorem}\label{tcomp}$T$ is continuous and compact.
\end{theorem}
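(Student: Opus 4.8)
The plan is to check first that $T$ is well defined, then to obtain compactness from a uniform Hölder estimate, and finally to derive continuity via a subsequence argument built on that same estimate. For $u\in C^1(\overline{D})$ put $h_u(x):=f(x,u(x),\nabla u(x))$. Since $f$ is continuous on $\overline{D}\times\mathbb{R}\times\mathbb{R}^N$ and $x\mapsto(x,u(x),\nabla u(x))$ maps $\overline{D}$ into a compact subset of that product, $h_u$ is continuous on $\overline{D}$ and in particular $h_u\in L^\infty(D)$. By Remark \ref{regularity} the problem $-\Delta_p v=h_u$, $v=0$ on $\partial D$, has a unique weak solution $v=Tu\in W_0^{1,p}(D)$, and $v\in C^{1,\tau}(\overline{D})$ for some $0<\tau<1$; hence $T\colon C^1(\overline{D})\to C^1(\overline{D})$ is well defined.

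For compactness, let $\mathcal{B}\subset C^1(\overline{D})$ be bounded, say $\|u\|_1\leq K$ for all $u\in\mathcal{B}$. Then $|h_u(x)|\leq C_K:=\max\{|f(x,s,\xi)|:x\in\overline{D},\ |s|\leq K,\ |\xi|\leq K\}<\infty$, a bound independent of $u\in\mathcal{B}$. By the estimates of Lieberman \cite{LIEBERMAN} (the same ones invoked for the regularity part of Theorem \ref{TBoc}), there are $\tau\in(0,1)$ and $C>0$, depending only on $p$, $D$ and $C_K$, with $\|Tu\|_{C^{1,\tau}(\overline{D})}\leq C$ for every $u\in\mathcal{B}$. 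Since $C^{1,\tau}(\overline{D})\hookrightarrow C^1(\overline{D})$ compactly (Arzel\`a--Ascoli), $T(\mathcal{B})$ is relatively compact in $C^1(\overline{D})$, so $T$ is compact.

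For continuity, take $u_n\to u$ in $C^1(\overline{D})$. The sequence $\{u_n\}$ is bounded in $C^1(\overline{D})$ and $u_n\to u$, $\nabla u_n\to\nabla u$ uniformly, so continuity of $f$ gives $h_{u_n}\to h_u$ uniformly on $\overline{D}$. Write $v_n=Tu_n$ and $v=Tu$. By the compactness just established, $\{v_n\}$ is relatively compact in $C^1(\overline{D})$, so it suffices to identify its only possible cluster point. If $v_{n_k}\to w$ in $C^1(\overline{D})$, then, because $\xi\mapsto|\xi|^{p-2}\xi$ is continuous on $\mathbb{R}^N$, the uniform convergence $\nabla v_{n_k}\to\nabla w$ yields $|\nabla v_{n_k}|^{p-2}\nabla v_{n_k}\to|\nabla w|^{p-2}\nabla w$ uniformly; combining this with $h_{u_{n_k}}\to h_u$ uniformly and passing to the limit in
\[
\int_D|\nabla v_{n_k}|^{p-2}\nabla v_{n_k}\cdot\nabla\phi\,dx=\int_D h_{u_{n_k}}\phi\,dx,\qquad\phi\in C_0^\infty(D),
\]
shows that $w$ is a weak solution of $-\Delta_p w=h_u$; moreover $w\in C^1(\overline{D})$ vanishes on $\partial D$ as a uniform limit of functions that do, hence $w\in W_0^{1,p}(D)$. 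By the uniqueness in Remark \ref{regularity}, $w=v$. Thus every subsequence of $\{v_n\}$ has a further subsequence converging to $v$ in $C^1(\overline{D})$, so $v_n\to v=Tu$, and $T$ is continuous.

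The main obstacle is really a single ingredient: the uniform $C^{1,\tau}$ bound of Lieberman, which simultaneously supplies compactness and the relative compactness that drives the subsequence argument for continuity. Beyond that the proof is a routine passage to the limit in the weak formulation; the only mild point to keep in mind is the behaviour of $\xi\mapsto|\xi|^{p-2}\xi$ near the origin when $1<p<2$, where it is only H\"older continuous but still continuous, which is all that the argument uses.
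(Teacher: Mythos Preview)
Your proof is correct. The paper takes a more indirect route: it factorizes $T=T_2\circ T_1$, where $T_1\colon C^1(\overline{D})\to L^\infty(\overline{D})$, $T_1(u)=f(\cdot,u,\nabla u)$, is the (continuous) Nemytskii operator, and $T_2\colon L^\infty(\overline{D})\to C^1(\overline{D})$ is the solution map $h\mapsto(-\Delta_p)^{-1}h$; it then invokes a packaged result (Azizieh--Cl\'ement, combining Anane's $L^\infty$ bounds with the $C^{1,\alpha}$ estimates of Lieberman and Tolksdorf) to assert that $T_2$ is compact and continuous. Your argument essentially unwinds this: your compactness step reproduces the content of that citation, while your continuity proof via ``compactness plus uniqueness of the limit'' replaces the need to know continuity of $T_2$ as a separate fact by a subsequence argument built directly on the weak formulation. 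The factorization buys brevity and modularity (the properties of $T_2$ are reusable); your approach buys self-containment and makes transparent exactly which estimate is doing the work. One small point worth making explicit in your write-up is that the uniform $C^{1,\tau}$ bound from Lieberman presupposes a uniform $L^\infty$ bound on $Tu$, which follows by comparison with the torsion-type problem (this is the role of the Anane citation in the paper).
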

\begin{proof}
It is clear that the mapping $T_1\colon C^1(\overline{D})\to L^\infty(\overline{D})$ given by $T_1(u)=f(x,u,\nabla u)$ is continuous. As pointed out in \cite{Azizieh}, Lemma 1.1, the $L^\infty$-estimates of Anane \cite{Anane} and the $C^{1,\alpha}$-estimates of Liebermann \cite{LIEBERMAN} and Tolksdorf \cite{Tolksdorf} imply that $T_2\colon L^\infty(\overline{D})\to C^1(\overline{D})$, defined by $T_2(h)=v$, where
\[{\displaystyle \left\{\begin{array}{rcll}
-\Delta_pv&=& h &\mbox{in} \ D, \\
v& =& 0 &\mbox{on} \ \partial D.\\
\end{array}\right.}\]
is compact and continuous.

Since $T=T_2\circ T_1$, where are done.
\end{proof}\vspace{.5cm}

Problem (\ref{hdiric}) satisfies a comparison principle and a strong maximum principle. (See \cite{DAMASCELLI},
Thm 1.2 and Thm 2.2, respectively.) It follows easily a comparison principle between solutions defined in different domains:
\begin{lemma}\label{cpdd} Suppose that $\Omega_1$, $\Omega_2$ are smooth
domains, $\Omega_1 \subset \Omega_2$. For $i \in \{1,2\}$, let
$L^\infty\ni h_i\colon \Omega_i \rightarrow \mathbb{R}$ and
$u_i \in C^{1,\alpha}(\overline{\Omega_i})$ be the weak solution of
the problem\begin{equation}{\displaystyle \left\{
\begin{array}{rcll}-\Delta_p u_i &=& h_i &\mbox{in} \ \Omega_i, \\u_i& =& 0 &\mbox{on} \ \partial \Omega_i.\\\end{array}\right.}\end{equation}Let us suppose that
\begin{enumerate}
\item [$(i)$]$0 \leq h_1 \leq h_2$ in $\Omega_1$,\item [$(ii)$]$u_1 \leq u_2$ in $\partial \Omega_1$.\end{enumerate}

Then $u_1\leq u_2$  in $\Omega_1$.
\end{lemma}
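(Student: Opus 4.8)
The plan is to extend the single-domain comparison principle (from \cite{DAMASCELLI}, Thm 1.2) to the two-domain situation by transporting everything to the larger domain $\Omega_2$, where the standard principle applies. Since $\Omega_1 \subset \Omega_2$, the solution $u_2$ is defined on all of $\overline{\Omega_1}$, so the difference $u_1 - u_2$ makes sense on $\overline{\Omega_1}$; the goal is to show $(u_1 - u_2)^+ \equiv 0$ in $\Omega_1$.

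First I would record the weak formulations: for all nonnegative $\phi \in W_0^{1,p}(\Omega_1)$,
\begin{equation*}
\int_{\Omega_1} |\nabla u_1|^{p-2}\nabla u_1 \cdot \nabla \phi \, dx = \int_{\Omega_1} h_1 \phi \, dx,
\end{equation*}
and, since $\phi$ extended by zero lies in $W_0^{1,p}(\Omega_2)$ and $u_2$ solves its equation there,
\begin{equation*}
\int_{\Omega_1} |\nabla u_2|^{p-2}\nabla u_2 \cdot \nabla \phi \, dx = \int_{\Omega_2} |\nabla u_2|^{p-2}\nabla u_2 \cdot \nabla \phi \, dx = \int_{\Omega_2} h_2 \phi \, dx = \int_{\Omega_1} h_2 \phi \, dx.
\end{equation*}
The key test function is $\phi := (u_1 - u_2)^+$ restricted to $\Omega_1$. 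Hypothesis $(ii)$, $u_1 \le u_2$ on $\partial\Omega_1$, together with $u_1 = 0$ on $\partial\Omega_1$, guarantees $\phi$ vanishes on $\partial\Omega_1$ in the trace sense, so $\phi \in W_0^{1,p}(\Omega_1)$ and is admissible; it is also nonnegative. Subtracting the two identities with this $\phi$ gives
\begin{equation*}
\int_{\Omega_1} \left( |\nabla u_1|^{p-2}\nabla u_1 - |\nabla u_2|^{p-2}\nabla u_2 \right) \cdot \nabla (u_1 - u_2)^+ \, dx = \int_{\Omega_1} (h_1 - h_2)(u_1 - u_2)^+ \, dx \le 0,
\end{equation*}
where the last inequality uses $(i)$, $h_1 \le h_2$, and $(u_1-u_2)^+ \ge 0$. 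On the set $\{u_1 > u_2\}$ we have $\nabla(u_1-u_2)^+ = \nabla u_1 - \nabla u_2$, and by the standard strict monotonicity inequality for the vector field $\xi \mapsto |\xi|^{p-2}\xi$ (strictly positive unless $\nabla u_1 = \nabla u_2$), the left-hand integrand is nonnegative and vanishes only where $\nabla(u_1-u_2)^+ = 0$. Hence $\nabla(u_1-u_2)^+ = 0$ a.e. in $\Omega_1$, so $(u_1-u_2)^+$ is constant on each connected component, and since it vanishes on $\partial\Omega_1$ it is identically zero. Therefore $u_1 \le u_2$ in $\Omega_1$.

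The main obstacle is the justification that $\phi = (u_1-u_2)^+|_{\Omega_1}$ genuinely belongs to $W_0^{1,p}(\Omega_1)$: one must combine $u_1 \in W_0^{1,p}(\Omega_1)$ with the boundary inequality $(ii)$ for $u_2$, using that the positive part of a $W^{1,p}$ function with nonpositive trace has zero trace; and one must check that extending $\phi$ by zero yields a legitimate test function on $\Omega_2$, which is where the smoothness of $\Omega_1$ and the inclusion $\Omega_1 \subset \Omega_2$ enter. The rest — the monotonicity inequality and the connectedness argument — is routine and already implicit in the comparison principle cited from \cite{DAMASCELLI}; indeed, one can alternatively phrase the whole argument as an immediate corollary of that principle applied on $\Omega_1$ to $u_1$ and the restriction of $u_2$, once one observes $-\Delta_p u_2 = h_2 \ge h_1$ weakly on $\Omega_1$ and $u_1 \le u_2$ on $\partial\Omega_1$.
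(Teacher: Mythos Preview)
Your argument is correct. The paper itself does not supply a proof of this lemma at all: it simply states, just before the lemma, that the result ``follows easily'' from the single-domain comparison principle of \cite{DAMASCELLI}, Thm~1.2, and then moves on. The route you describe in your final sentence --- restrict $u_2$ to $\Omega_1$, observe that $-\Delta_p u_2 = h_2 \ge h_1 = -\Delta_p u_1$ weakly there, use hypothesis $(ii)$ on $\partial\Omega_1$, and invoke the cited comparison principle --- is exactly what the paper has in mind. Your main body spells out the underlying test-function computation with $\phi=(u_1-u_2)^+$, which is just the standard proof of that comparison principle itself; this is more than the paper asks for, but perfectly valid, and your identification of the one genuine technical point (that $\phi\in W_0^{1,p}(\Omega_1)$ and extends by zero to a legitimate test function on $\Omega_2$) is apt.
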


In the setting of equation (\ref{hdiric}), we define
\begin{equation}\label{k1}
k_1(D):=\|\phi_D\|_{\infty}^{-(p-1)}
\end{equation}
where $\phi_D \in C^{1,\alpha}(\overline{D})\cap W_0^{1,p}(D)$ is the solution of
\begin{equation}\label{problema com w(x) em omega} {\displaystyle \left\{\begin{array}{rcll}
-\Delta_p \phi_D &=& \omega_D &\mbox{in} \ D, \\
\phi_D& =& 0 &\mbox{on} \ \partial D.\\
\end{array}\right.}
\end{equation}
By the maximum principle, $\phi_D > 0$ in $D$ and $k_1(D)$ is well defined.
\begin{remark}\label{rcpdd}\rm In the context of Lemma \ref{cpdd}, that is, $\Omega_1\subset\Omega_2$ and $\omega_{\Omega_1}\leq\omega_{\Omega_2}$, it follows immediately that
\[k_1(\Omega_2)=\|\phi_{\Omega_2}\|_\infty^{-(p-1)} \leq\|\phi_{\Omega_1}\|_\infty^{-(p-1)}= k_1(\Omega_1).\]
\end{remark}\vspace{.4cm}

In the special case $D=B_\rho$, a ball of radius $\rho$ centered in $x_0 \in \Omega$, let us consider the Dirichlet problem
\begin{equation}\label{prad}
{\displaystyle \left\{\begin{array}{rcll}
-\Delta_p \phi_\rho &=& \omega_\rho(|x-x_0|) &\text{in} \ B_\rho, \\
\phi_\rho& =& 0 &\text{on} \ \partial B_\rho,\\
\end{array}\right.}\end{equation}
where $\omega_\rho\colon\overline{B_\rho} \rightarrow \mathbb{R}$ is a \emph{radial} weight function.

It is straightforward to verify that the solution of (\ref{prad}) is given by
\begin{equation}\label{sprad}
\phi_\rho(|x-x_0|)=\displaystyle{\int_{|x-x_0|}^\rho \left(\int_0^\theta K(s,\theta) ds\right)^{\frac{1}{p-1}} d\theta}, \ |x-x_0| \leq \rho,\end{equation}
where
\begin{equation}\label{kernel}
K(s,\theta)=\left(\frac{s}{\theta}\right)^{N-1} \omega_\rho(s).\end{equation}

The solution $\phi_\rho$ satisfies $\phi_\rho \in C^2(\overline{B_\rho})$ if $1<p \leq 2$ and $\phi_\rho \in
C^{1,\tau}(\overline{B_\rho})$ if $p>2$, where $\tau=1/(p-1)$. (See \cite{HGZW}, Lemma 2 for details.)

We also define another constant that will play an essential role in our technique:
\begin{eqnarray}\label{k2}
k_2(B_\rho)&=&\displaystyle{\left[\int_t^\rho\left(\int_0^t K(s,\theta)ds\right)^{\frac{1}{p-1}}d\theta \right]^{1-p}}\notag\\ &=&\displaystyle{\left[\max_{0 \leq r \leq \rho}\int_r^\rho \left(\int_0^r K(s,\theta)ds\right)^{\frac{1}{p-1}}d\theta\right]^{1-p}}.\end{eqnarray}

Since $\omega_\rho$ has isolated zeroes and the function
\begin{equation*}\label{alpha - t maior que zero} \alpha \rightarrow \displaystyle{ \int_\alpha^\rho
\left(\int_0^\alpha K(s,\theta)ds\right)^{\frac{1}{p-1}}d\theta}\end{equation*}
is nonnegative and vanishes both at $\alpha=0$ and at $\alpha=\rho$, we have $t>0$.\vspace{.3cm}

We now establish the relation between $k_1(D)$ and $k_2(B_\rho)$, also valid in the case $D=B_\rho$:
\begin{lemma}\label{lema comparacao entre k1 de omega e k2 da bola}
Let $D$ be a smooth domain in $\mathbb{R}^N$ $(N>1)$, $B_\rho\subseteq D$ a ball of center $x_0$ and radius $\rho>0$ and $k_1(D),\ k_2(B_\rho)$ the constants defined by $(\ref{k1})$ and $(\ref{k2})$, respectively, where $\omega_\rho$ is a radial weight function such that $\omega_D\geq \omega_\rho$ in $B_\rho$.

Then, $k_1(D)<k_2(B_\rho)$.
\end{lemma}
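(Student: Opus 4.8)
The plan is to split the claimed strict inequality into two links of a chain: first, the non-strict comparison $k_1(D)\leq\|\phi_\rho\|_\infty^{-(p-1)}$, where $\phi_\rho$ is the explicit radial solution of $(\ref{prad})$; and second, the strict comparison $\|\phi_\rho\|_\infty^{-(p-1)}<k_2(B_\rho)$. Combining them yields $k_1(D)<k_2(B_\rho)$.

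For the first link I would invoke Lemma \ref{cpdd} (equivalently, Remark \ref{rcpdd}) with $\Omega_1=B_\rho$ carrying the radial weight $\omega_\rho$ and $\Omega_2=D$ carrying $\omega_D$: hypothesis $(i)$ is exactly the standing assumption $0\leq\omega_\rho\leq\omega_D$ in $B_\rho$, and hypothesis $(ii)$ holds because $\phi_\rho\equiv 0$ on $\partial B_\rho$ while $\phi_D\geq 0$ there by the strong maximum principle. Hence $\phi_\rho\leq\phi_D$ in $B_\rho$, so $\|\phi_\rho\|_\infty\leq\|\phi_D\|_\infty$, and raising to the power $-(p-1)<0$ reverses the inequality to give $k_1(D)=\|\phi_D\|_\infty^{-(p-1)}\leq\|\phi_\rho\|_\infty^{-(p-1)}$.

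For the second link I would use the explicit formulas $(\ref{sprad})$--$(\ref{kernel})$. Since $\phi_\rho$ is radially nonincreasing, its sup-norm is attained at the center:
\[
\|\phi_\rho\|_\infty=\phi_\rho(0)=\int_0^\rho\Bigl(\int_0^\theta K(s,\theta)\,ds\Bigr)^{\frac{1}{p-1}}d\theta .
\]
On the other hand, writing $t\in(0,\rho)$ for the maximizer in $(\ref{k2})$,
\[
k_2(B_\rho)^{\frac{1}{1-p}}=\int_t^\rho\Bigl(\int_0^t K(s,\theta)\,ds\Bigr)^{\frac{1}{p-1}}d\theta .
\]
Because $K\geq 0$ and $t\leq\theta$ on $[t,\rho]$, the integrand of the second display is dominated pointwise by that of the first; and since $\omega_\rho$ has isolated zeros, $K(\cdot,\theta)>0$ a.e., so $\theta\mapsto\int_0^\theta K(s,\theta)\,ds$ is strictly increasing, making the domination strict on a nondegenerate subinterval of $[t,\rho]$ (nondegenerate because $t<\rho$). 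Adding that the omitted tail $\int_0^t(\cdots)^{1/(p-1)}\,d\theta$ is nonnegative (in fact positive, as $t>0$), we get $k_2(B_\rho)^{1/(1-p)}<\|\phi_\rho\|_\infty$. Applying the strictly decreasing bijection $x\mapsto x^{1-p}$ of $(0,\infty)$ gives $k_2(B_\rho)>\|\phi_\rho\|_\infty^{1-p}=\|\phi_\rho\|_\infty^{-(p-1)}$, as desired.

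The only point requiring care is the strictness in the second link: it relies on the facts, already recorded just before the statement, that $0<t<\rho$ and that $\omega_\rho$ has isolated zeros (so that the inner integral is strictly monotone in $\theta$). Everything else — the radial monotonicity of $\phi_\rho$, the finiteness of the double integral (since $\omega_\rho$ is bounded and $(s/\theta)^{N-1}\leq 1$ for $s\leq\theta$), and the behaviour of $x\mapsto x^{1-p}$ — is routine.
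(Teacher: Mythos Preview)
Your proof is correct and follows essentially the same route as the paper's: compare $\phi_D$ with $\phi_\rho$ via the comparison principle to get $\|\phi_D\|_\infty\geq\|\phi_\rho\|_\infty$, then use the explicit integral formula for $\|\phi_\rho\|_\infty$ and compare it with the defining expression for $k_2(B_\rho)^{1/(1-p)}$ to obtain the strict inequality.

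One small wording issue: the claim that ``$\theta\mapsto\int_0^\theta K(s,\theta)\,ds$ is strictly increasing'' is not literally what you need (and is not obviously true, since $K(s,\theta)=(s/\theta)^{N-1}\omega_\rho(s)$ depends on $\theta$ through the denominator as well). What you actually need is that, for each fixed $\theta\in(t,\rho]$, $\int_0^t K(s,\theta)\,ds<\int_0^\theta K(s,\theta)\,ds$, i.e.\ $\int_t^\theta K(s,\theta)\,ds>0$; this follows immediately from $\omega_\rho$ having isolated zeros. In any case your second argument---positivity of the omitted tail $\int_0^t(\cdots)^{1/(p-1)}d\theta$ because $t>0$---already secures the strict inequality on its own, and this is exactly the justification the paper gives.
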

\begin{proof}We have
\[{\displaystyle \left\{
\begin{array}{rcll}
-\Delta_p \phi_D=\omega_D& \geq & \omega_\rho= -\Delta_p \phi_\rho &\mbox{in} \ B_\rho, \\
\phi_D& \geq & 0=\phi_\rho &\mbox{on} \ \partial B_\rho\\
\end{array}\right.}\]
and the comparison principle yields $\phi_D\geq \phi_\rho$ in $B_\rho$.

Therefore
$$\displaystyle{\|\phi_D\|_\infty \geq \| \phi_\rho\|_\infty= \int_0^\rho
\left(\int_0^{\theta} K(s,\theta) ds \right)^{\frac{1}{p-1}} d\theta}$$
and
$$k_1(D)^{-1}=\|\phi_D\|_{\infty}^{p-1} \geq \|\phi_\rho\|_{\infty}^{p-1}= \displaystyle{\left[ \int_0^\rho
\left(\int_0^\theta K(s,\theta)ds\right)^{\frac{1}{p-1}}d\theta
\right]^{p-1}}.$$

Since the zeroes of $\omega_D$ are isolated and $t \neq 0$, we have
$$ k_1(D)^{-1}> \displaystyle{\left[ \int_t^\rho \left(\int_0^t K(s,\theta)ds\right)^{\frac{1}{p-1}}d\theta
\right]^{p-1}=k_2(B_\rho)^{-1}},$$
proving our result.
\end{proof}
\section{Radial Solutions}\label{SRad}
In this section we study the radial version of (\ref{prob1}), that is
\begin{equation*}\hspace{2.1cm}{\displaystyle\left\{\begin{array}{rcll}
-\Delta_pu &=& \omega_\rho(|x-x_0|)f(u,|\nabla u|) &\mbox{in} \ B_\rho, \\
u& =& 0 &\mbox{on} \ \partial B_\rho,\\
\end{array}\right.}\hspace{2.1cm}(\ref{prob2})
\end{equation*}
where $B_\rho$ is a ball of radius $\rho$ centered in $x_0$ and $\omega_\rho\colon\overline{B_\rho} \rightarrow \mathbb{R}$ is a radial weight function.

A solution of (\ref{prob2}) will be obtained by applying the Schauder Fixed Point Theorem in the space $C^1(B_\rho)$. So, the hypothesis (H3) is not necessary; we only need $f$ to be continuous.

The radial boundary value problem equivalent to (\ref{prob2}) is
\begin{equation}\label{rp}
\left\{\begin{array}{rcll}
{\displaystyle\frac{d}{dr}}\left(-r^{N-1}\varphi_p(u'(r))\right)&=&r^{N-1}\omega_\rho(r)f(u,|u'(r)|),\quad 0<r<\rho\\
u'(0)&=&0,\\
u(\rho)&=&0,&
\end{array}\right.\end{equation}
where $\varphi_p(\xi)=|\xi|^{p-2}\xi$ for $1<p<\infty$.

If $q=p/p-1$ and $u>0$, the function $\varphi_q$, inverse of $\varphi_p$, is given by
\[\varphi_q(u)=|u|^{q-2}u=u^{q-1}=u^{\frac{p}{p-1}-1}=u^{\frac{1}{p-1}}.\]

We remark that the function $\phi_p(r)$, $r=|x-x_0|$, defined by (\ref{sprad}), can be written as
\[\phi_\rho(r)=\int_r^\rho \varphi_p\left(\int_0^\theta K(s,\theta)\,ds\right)d\theta.\]
So,
\[\phi'_\rho(r)=-\varphi_q\left(\int_0^r K(s,r)\,ds\right)\]
and $|\nabla \phi_\rho(|x-x_0|)|=|\phi'_\rho(r)\frac{x-x_0}{r}|=|\phi'_\rho(r)|$. Therefore, we have $\|\nabla\phi_\rho\|_\infty=\max_{0\leq r\leq \rho}|\phi'_\rho(r)|$.

To prove the existence of solutions for problem (\ref{prob2}), we suppose the existence of $\delta$ and $M$, with $0<\delta<M$ such that the nonlinearity $f$ satisfies
{\renewcommand{\leftmargini}{1cm}
\begin{enumerate}
\item [(H$1_r)$] \label{H2a} $0 \leq f(u,|v|) \leq k_1(B_\rho) M^{p-1}, \
\textrm{if } \ 0 \leq u \leq M, \ |v| \leq \gamma_\rho  M$;
\item [(H$2_r)$] \label{H2b} $f(u,|v|) \geq k_2(B_\rho){\delta}^{p-1}, \
\textrm{if } \  \delta \leq u \leq M, \ |v| \leq \gamma_\rho  M$,
\end{enumerate}}
\noindent with $k_1(B_\rho)$ and $k_2(B_\rho)$ defined by (\ref{k1}) and (\ref{k2}), respectively, and $\gamma_\rho$ defined by
\begin{equation}\label{gamma}
\gamma_\rho=\max_{0\leq r \leq \rho}\varphi_q \left(k_1(B_\rho)\int_0^{r} K(s,r)\, ds \right)=\frac{\|\nabla \phi_\rho\|_\infty}{\|\phi_\rho\|_\infty}.
\end{equation}
We remark that $k_1(B_\rho)$, $k_2(B_\rho)$ and $\gamma_\rho$ depend only on $\rho$ and $\omega_\rho$. The hypothesis (H$2_r$) aims to discard $u\equiv 0$ as a solution of (\ref{prob2}), in the case $f(0,|v|)=0$.

We also define the continuous functions $\Psi_{\delta}$, $ \Phi_M $ and
$\Gamma_M$ by
\begin{equation}\label{Psi delta}
{\displaystyle \Psi_{\delta}(r)=\left\{\begin{array}{ll}\delta, &\mbox{if} \ 0 \leq r \leq t, \\
\displaystyle{\delta \int_r^\rho \varphi_q \left(k_2(B_\rho) \int_0^{t} K(s,\theta)\, ds \right) d\theta},& \mbox{if} \ t <r \leq \rho,\\ \end{array}\right.}
\end{equation}
\begin{eqnarray}\label{Phi M}
\Phi_M(r)&=&\displaystyle{M \int_r^\rho \varphi_q \left(k_1(B_\rho)\int_0^{\theta} K(s,\theta)\, ds
\right) d\theta}\notag\\
&=& M\frac{\phi_\rho(r)}{\|\phi_\rho\|_\infty},\ \textrm{if} \ 0<r\leq \rho,\end{eqnarray}
and
\begin{eqnarray}\label{Gamma M}
\Gamma_M(r)&=&M \varphi_q \left( k_1(B_\rho)\int_0^r K(s,r)\, ds\right)\notag\\
 &=&M\frac{|\phi_\rho'(r)|}{\|\phi_\rho\|_\infty},\  \textrm{if} \ 0<r\leq \rho.\end{eqnarray}
\begin{lemma}\label{lemmaaux}We have
\begin{enumerate}
\item[$(i)$] $0 \leq \Phi_M(r) \leq M$;
\item[$(ii)$] $0 \leq \Gamma_M(r)\leq \gamma_\rho M$;
\item[$(iii)$] $0 \leq \Psi_{\delta}(r)\leq \Phi_M(r)$.
\end{enumerate}
\end{lemma}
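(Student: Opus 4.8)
The plan is to prove each of the three inequalities directly from the explicit formulas defining $\Phi_M$, $\Gamma_M$ and $\Psi_\delta$, using only the nonnegativity of the kernel $K(s,\theta)$ and the definitions of $k_1(B_\rho)$, $k_2(B_\rho)$ and $\gamma_\rho$. There is no serious obstacle here: everything follows from monotonicity of integrals and the way the constants were defined; the mild care needed is in matching the "$\max$'' characterizations of the constants to the pointwise estimates. I would organize the argument as follows.

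For part $(i)$, I would start from the representation $\Phi_M(r)=M\,\phi_\rho(r)/\|\phi_\rho\|_\infty$ valid for $0<r\le\rho$. Since $\phi_\rho\ge 0$ in $\overline{B_\rho}$ by the maximum principle (as noted after $(\ref{problema com w(x) em omega})$), we get $\Phi_M(r)\ge 0$; and since $0\le\phi_\rho(r)\le\|\phi_\rho\|_\infty$ by definition of the sup norm, we get $\Phi_M(r)\le M$. (The value at $r=0$ is covered by continuity, or equivalently by noting $\phi_\rho(0)=\|\phi_\rho\|_\infty$.) For part $(ii)$, I would use $\Gamma_M(r)=M\,|\phi_\rho'(r)|/\|\phi_\rho\|_\infty$. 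Nonnegativity is immediate. For the upper bound, recall from $(\ref{gamma})$ that $\gamma_\rho=\|\nabla\phi_\rho\|_\infty/\|\phi_\rho\|_\infty=\max_{0\le r\le\rho}|\phi_\rho'(r)|/\|\phi_\rho\|_\infty$ (using $\|\nabla\phi_\rho\|_\infty=\max_{0\le r\le\rho}|\phi_\rho'(r)|$, established just above). Hence $|\phi_\rho'(r)|\le\gamma_\rho\|\phi_\rho\|_\infty$ for every $r\in[0,\rho]$, which gives $\Gamma_M(r)\le\gamma_\rho M$.

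For part $(iii)$, nonnegativity of $\Psi_\delta$ is clear from $(\ref{Psi delta})$ since $\delta>0$ and $K\ge 0$. The real content is $\Psi_\delta(r)\le\Phi_M(r)$, which I would split according to the two pieces of the definition of $\Psi_\delta$. On $[0,t]$ we have $\Psi_\delta(r)=\delta$, and since $\delta<M=\Phi_M(0)$ and $\Phi_M$ is nonincreasing in $r$ — being a decreasing integral from $r$ to $\rho$ of a nonnegative integrand — it suffices to check $\delta\le\Phi_M(t)$; this is precisely the defining inequality of $k_2(B_\rho)$ in $(\ref{k2})$, namely $\delta=\delta^{p-1}\cdot\delta^{2-p}$... more cleanly: $\Phi_M(t)=M\varphi_q\!\big(\cdots\big)\ge \delta$ will follow by comparing the defining integrals of $k_1$ and $k_2$, using $\|\phi_\rho\|_\infty^{p-1}=k_1(B_\rho)^{-1}\ge k_2(B_\rho)^{-1}$ (a consequence of Lemma~\ref{lema comparacao entre k1 de omega e k2 da bola} with $D=B_\rho$, or directly from $(\ref{k1})$–$(\ref{k2})$) together with $\delta<M$. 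On $(t,\rho]$, both $\Psi_\delta$ and $\Phi_M$ are integrals of nonnegative integrands from $r$ to $\rho$, so it is enough to compare the integrands pointwise: I would show
\[
\delta\,\varphi_q\!\left(k_2(B_\rho)\int_0^{t}K(s,\theta)\,ds\right)\le M\,\varphi_q\!\left(k_1(B_\rho)\int_0^{\theta}K(s,\theta)\,ds\right)
\]
for each $\theta\in(t,\rho]$, using that $\varphi_q$ is increasing, that $\int_0^t K(s,\theta)\,ds\le\int_0^\theta K(s,\theta)\,ds$ (as $K\ge0$ and $t\le\theta$), and that $\delta^{p-1}k_2(B_\rho)\le M^{p-1}k_1(B_\rho)$ — the last point again being $\delta<M$ combined with $k_2(B_\rho)^{-1}\le k_1(B_\rho)^{-1}$. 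Raising to the power $q-1=1/(p-1)$ and pulling constants through $\varphi_q$ gives the desired pointwise bound, hence $(iii)$. The only mildly delicate bookkeeping is keeping the two constants $k_1,k_2$ and the two integration limits $t,\theta$ aligned correctly when combining them; none of it is more than elementary monotonicity.
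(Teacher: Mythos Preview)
Your arguments for $(i)$ and $(ii)$ are fine and coincide with what the paper calls ``obvious.'' The gap is in $(iii)$, specifically in your justification of the crucial inequality
\[
k_2(B_\rho)\,\delta^{p-1}\le k_1(B_\rho)\,M^{p-1}.
\]
You claim this follows from ``$\delta<M$ combined with $k_2(B_\rho)^{-1}\le k_1(B_\rho)^{-1}$.'' But $k_2^{-1}\le k_1^{-1}$ means $k_1\le k_2$, and from $\delta<M$ and $k_1\le k_2$ you only get $\delta^{p-1}k_1\le M^{p-1}k_2$, which is the wrong direction. A concrete counterexample: $p=2$, $\delta=1$, $M=2$, $k_1=1$, $k_2=100$ satisfy your premises but give $k_2\delta^{p-1}=100>2=k_1 M^{p-1}$. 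So the inequality you need is simply not a consequence of the size relations between $\delta,M$ and $k_1,k_2$.

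The paper obtains $k_2(B_\rho)\delta^{p-1}\le k_1(B_\rho)M^{p-1}$ from the hypotheses $(\mathrm{H}1_r)$ and $(\mathrm{H}2_r)$ themselves: on the overlap region $\delta\le u\le M$, $|v|\le\gamma_\rho M$, both bounds apply to $f$, so
\[
k_2(B_\rho)\delta^{p-1}\le f(u,|v|)\le k_1(B_\rho)M^{p-1}.
\]
This is a compatibility condition implicitly imposed on $\delta$ and $M$ by the assumption that such an $f$ exists; it is not an inequality you can manufacture from $k_1<k_2$ alone. Once you have this inequality, your case split on $[0,t]$ and $(t,\rho]$ and your pointwise comparison of integrands are exactly the paper's argument.
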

\begin{proof}
The items $(i)$ and $(ii)$ are obvious.

It follows from (H$1_r$) and (H$2_r$) that $k_2(B_\rho) \delta^{p-1}\leq k_1(B_\rho)
M^{p-1}$. Therefore, if $0 \leq r \leq t$, then
\begin{align*}
\Phi_M(r)&=M\int_r^\rho\varphi_q\left(k_1(B_\rho)\int_0^\theta K(s,\theta)\,ds\right)
d\theta\\
&\geq \int_t^\rho \varphi_q\left(k_1(B_\rho)M^{p-1}\int_0^tK(s,\theta)\, ds\right)d\theta\\
&\geq\int_t^\rho\varphi_q\left(k_2(B_\rho)\delta^{p-1}\int_0^t K(s,\theta)\,ds\right)d\theta\\
&=\delta=\Psi_\delta(r).
\end{align*}

If $t < r \leq \rho$, then
\begin{align*}
\Phi_M(r)&=M\int_r^\rho\varphi_q\left(k_1(B_\rho)\int_0^\theta K(s,\theta)\,ds\right)d\theta\\
&\geq\int_r^\rho\varphi_q\left(k_2(B_\rho)\delta^{p-1}\int_0^\theta K(s,\theta)\,ds\right)d\theta\\
&\geq\delta\int_r^\rho\varphi_q\left(k_2(B_\rho)\int_0^t K(s,\theta)\,ds\right)d\theta= \Psi_\delta(r),
\end{align*}
completing the proof of $(iii)$.
\end{proof}

We now establish the main result of this section:
\begin{theorem}\label{sball} Suppose that the continuous nonlinearity $f$ satisfies $({\rm H}1_r)$ and $({\rm H}2_r)$. Then the problem
\begin{equation}{\displaystyle \left\{\begin{array}{rcll}
-\Delta_pu &=& \omega_\rho(|x-x_0|)f(u,|\nabla u|) &\mbox{in} \ B_\rho, \\
u& =& 0 &\mbox{on} \ \partial B_\rho,\\
\end{array}\right.}\tag{\ref{prob2}}
\end{equation}
has at least one positive solution $u_\rho(|x-x_0|)$ satisfying
$$\Psi_\delta \leq u_\rho \leq \Phi_M \ and \ |\nabla u_\rho|\leq \Gamma_M$$
$($and so $\delta \leq \|u_\rho\|_\infty \leq M$ and $\|\nabla u_\rho\|_\infty\leq\gamma_\rho M)$.
\end{theorem}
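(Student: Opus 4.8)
The plan is to apply the Schauder Fixed Point Theorem to the operator that solves the linearized radial problem, working in the convex set of functions trapped between the barriers $\Psi_\delta$ and $\Phi_M$ with gradient controlled by $\Gamma_M$. Concretely, I would set
\[
\mathcal{K}=\left\{u\in C^1([0,\rho]) : \Psi_\delta(r)\leq u(r)\leq \Phi_M(r),\ |u'(r)|\leq \Gamma_M(r)\ \text{for all }r\in[0,\rho]\right\},
\]
which is nonempty (it contains $\Phi_M$ by Lemma~\ref{lemmaaux}), convex, bounded and closed in $C^1([0,\rho])$. For $u\in\mathcal{K}$, define $Tu$ to be the unique radial solution $v$ of $-\Delta_p v=\omega_\rho(|x-x_0|)f(u,|u'|)$ in $B_\rho$ with $v=0$ on $\partial B_\rho$; by the ODE in (\ref{rp}) and the integration carried out in (\ref{sprad})--(\ref{kernel}), this is given explicitly by
\[
(Tu)(r)=\int_r^\rho \varphi_q\!\left(\int_0^\theta \left(\tfrac{s}{\theta}\right)^{N-1}\omega_\rho(s)\,f\bigl(u(s),|u'(s)|\bigr)\,ds\right)d\theta,
\]
with $(Tu)'(r)=-\varphi_q\bigl(\int_0^r (s/r)^{N-1}\omega_\rho(s)f(u(s),|u'(s)|)\,ds\bigr)$.

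The core of the argument is to show $T(\mathcal{K})\subseteq\mathcal{K}$. For the upper bound: if $u\in\mathcal{K}$ then $0\leq u\leq M$ and $|u'|\leq \Gamma_M\leq \gamma_\rho M$, so hypothesis (H$1_r$) gives $0\leq f(u,|u'|)\leq k_1(B_\rho)M^{p-1}$; plugging this into the formula for $Tu$ and using monotonicity of $\varphi_q$ and of the integrals yields $0\leq (Tu)(r)\leq \Phi_M(r)$, and similarly $|(Tu)'(r)|\leq \Gamma_M(r)$ directly from the formula for $(Tu)'$. For the lower bound I would use (H$2_r$): since $u\geq \Psi_\delta$, on the set where $\Psi_\delta\geq\delta$ — which by (\ref{Psi delta}) contains at least $[0,t]$ — we have $\delta\leq u\leq M$ and $|u'|\leq \gamma_\rho M$, hence $f(u,|u'|)\geq k_2(B_\rho)\delta^{p-1}$ there; restricting the inner integral to $s\in[0,t]$ (where $f$ is bounded below) and using $K\geq 0$ gives
\[
(Tu)(r)\geq \int_r^\rho \varphi_q\!\left(k_2(B_\rho)\delta^{p-1}\int_0^{\min\{r,t\}} K(s,\theta)\,ds\right)d\theta \geq \Psi_\delta(r),
\]
the last inequality being exactly the computation in the proof of Lemma~\ref{lemmaaux}(iii) read with $k_2$ and $\delta$ in place of $k_1$ and $M$. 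Thus $Tu\in\mathcal{K}$. Compactness and continuity of $T\colon\mathcal{K}\to\mathcal{K}$ follow from Theorem~\ref{tcomp} (or directly: the explicit formula shows $Tu$ and $(Tu)'$ are uniformly bounded and equicontinuous on $[0,\rho]$ by dominated convergence and continuity of $f$, so $T(\mathcal{K})$ is precompact in $C^1$), so Schauder gives a fixed point $u_\rho=Tu_\rho\in\mathcal{K}$, which is by construction a radial solution of (\ref{prob2}) satisfying $\Psi_\delta\leq u_\rho\leq \Phi_M$ and $|\nabla u_\rho|\leq\Gamma_M$.

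Finally I would argue positivity: since $u_\rho\geq\Psi_\delta$ and $\Psi_\delta\equiv\delta>0$ on $[0,t]$ with $t>0$, we get $u_\rho>0$ near the center, and the strong maximum principle (applicable because $-\Delta_p u_\rho=\omega_\rho f(u_\rho,|\nabla u_\rho|)\geq 0$) forces $u_\rho>0$ throughout $B_\rho$; the bounds $\delta\leq\|u_\rho\|_\infty\leq M$ and $\|\nabla u_\rho\|_\infty\leq\gamma_\rho M$ then drop out of Lemma~\ref{lemmaaux} and the definitions. I expect the main obstacle to be the lower-bound step $(Tu)(r)\geq\Psi_\delta(r)$: one must be careful that the region where (H$2_r$) applies (namely where $u\geq\delta$, guaranteed only on $[0,t]$ via $\Psi_\delta$) is exactly the region over which one integrates to reproduce the defining integral of $\Psi_\delta$, and that the truncation of the inner integral at $t$ together with the inequality $k_2(B_\rho)\delta^{p-1}\leq k_1(B_\rho)M^{p-1}$ (itself a consequence of the compatibility of (H$1_r$) and (H$2_r$)) makes the chain of inequalities close up; a secondary technical point is verifying the $C^1$-regularity and the behavior at $r=0$ of $Tu$ so that $\mathcal{K}$ genuinely sits inside $C^1([0,\rho])$ and the $p>2$ case is covered, which is handled by the regularity statement following (\ref{kernel}).
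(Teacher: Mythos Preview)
Your proposal is essentially the paper's own proof: the same convex set, the same integral operator, the same appeal to Theorem~\ref{tcomp} for compactness, and the same use of (H$1_r$)/(H$2_r$) to show invariance before invoking Schauder. One small slip: in your displayed lower bound the inner cutoff $\min\{r,t\}$ is too small when $r<t$ (it yields a quantity $\leq\delta$ rather than $\geq\delta$, by the very definition of $k_2$ as the maximizer over $r$); the fix---which you in fact allude to via Lemma~\ref{lemmaaux}(iii)---is to first shrink the \emph{outer} integral to $[t,\rho]$ and then cut the inner one at $t$, exactly as the paper does in its two-case computation.
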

\begin{proof}To obtain a positive solution of (\ref{rp}), we consider the
Banach space $X=C^1([0,\rho])$, with the norm $\displaystyle{\|u\|=\sup_{s\in[0,\rho]}|u(s)| +\sup_{s\in[0,\rho]}|u'(s)|}$ and
the integral operator $T\colon X\to X$ defined by
\begin{equation*}\label{U} \displaystyle{(Tu)(r)=\int_r^\rho \varphi_q \left(\int_0^\theta K(s,\theta)\, f(u(s),| u'(s)|)ds\right)\,d\theta}, \ 0\leq r \leq \rho.\end{equation*}

It follows immediately that
\begin{equation*}\displaystyle{(Tu)'(r)=-\varphi_q \left(\int_0^r K(s,r) f(u(s),|u'(s)|)\,ds\right)}, \ 0\leq
r \leq \rho.\end{equation*}

Theorem \ref{tcomp} yields the continuity and compactness of $T$. (A direct proof that $T\colon X \to X$ is a continuous, compact operator can be found in the Appendix.)

Now we observe that, if $u$ is a fixed point of the operator $T$, then $u$ is a solution of (\ref{rp}). To prove the existence of a fixed point $u$ of $T$, we apply the Schauder Fixed Point Theorem in the closed, convex and bounded subset
\begin{equation*}\label{Y}
Y=\left\{ u \in X: \Psi_{\delta}\leq u \leq \Phi_M \ \textrm{and}
\ |u'| \leq \Gamma_M \right\}.
\end{equation*}

We need only to show that $T(Y) \subset Y$.

It follows from Lemma \ref{lemmaaux} and (H$1_r$) that, for all
$0\leq r \leq \rho$, we have
\begin{align*}
(Tu)(r)&=\int_r^\rho\varphi_q\left(\int_0^\theta K(s,\theta)f(u(s), |u'(s)|) ds\right)\,d \theta\\
&\leq\int_r^\rho\varphi_q\left(k_1(B_\rho)M^{p-1}\int_0^\theta K(s,\theta)\,ds\right)d\theta\\
&=M\int_r^\rho \varphi_q \left( k_1(B_\rho)\int_0^\theta K(s,\theta)\, ds \right)d \theta\\
&= \Phi_M(r)\end{align*} and
\begin{align*}
|(Tu)'(r)| & = \varphi_q \left(\int_0^r K(s,r) f(u(s), |u'(s)|)\, ds \right)\\
& \leq \varphi_q \left(k_1(B_\rho) M^{p-1} \int_0^r K(s,r)\,ds \right)\\
& = M \varphi_q \left(  k_1(B_\rho) \int_0^r K(s,r)\,ds \right)\\
& = \Gamma_M(r) \leq \gamma_\rho  M.\end{align*}

Suppose that $0 \leq r \leq t$. The definition of $Y$ and (H$2_r$) imply that \[f(u(s),|u'(s)|)\geq k_2(B_\rho)\delta^{p-1}.\] Therefore,
\begin{align*}
(Tu)(r) & =\int_r^\rho\varphi_q \left( \int_0^\theta K(s,\theta) f(u(s), |u'(s)|)\, ds \right)d \theta\\
& \geq \int_t^\rho \varphi_q \left(\int_0^t K(s,\theta) f(u(s), |u'(s)|)\, ds \right)d \theta\\
& \geq \int_t^\rho \varphi_q \left( k_2(B_\rho) \delta^{p-1} \int_0^t K(s,\theta)\,ds \right)d \theta\\
&= \delta= \Psi_\delta(r).
\end{align*}
If $t \leq r \leq \rho$, then
\begin{align*}
(Tu)(r) & =\int_r^\rho \varphi_q \left(\int_0^\theta K(s,\theta)f(u(s), |u'(s)|)\, ds \right)d \theta\\
& \geq \int_r^\rho \varphi_q \left(\int_0^t K(s,\theta)f(u(s), |u'(s)|)\, ds \right)d \theta\\
& \geq \int_r^\rho \varphi_q \left(  k_2(B_\rho) \delta^{p-1}\int_0^t K(s,\theta)\, ds \right)d \theta\\
& = \Psi_\delta(r).
\end{align*}

So, we have $T(Y) \subset Y$. By the Schauder Fixed Point Theorem, we conclude the existence of at least one positive solution $u_\rho$ for (\ref{rp}) in $Y$, thus implying that $u_\rho(|x-x_0|)$ is a positive solution of (\ref{prob2}) that satisfies the bounds stated in the theorem.
\end{proof}
\section{Existence of Solutions in General Domains}\label{GD}
In this section we state and prove our main abstract result: the existence of a positive solution
for
\begin{equation}\left\{\begin{array}{rcll}
-\Delta_pu &=& \omega(x)f(u,|\nabla u|) &\mbox{in} \ \Omega, \\
u& =& 0 &\mbox{on} \ \partial\Omega.
\end{array}\right.
\tag{\ref{prob1}}%
\end{equation}

We start by defining the parameters we need to formulate our hypotheses.

Let $\Omega_{2}$ be a bounded, smooth domain such that $\Omega_{2}%
\supset\Omega$ and define
\[k_1(\Omega_2):=\|\phi_{\Omega_2}\|_{\infty}^{1-p},\]
where $\phi_{\Omega_{2}}$ is the solution of
\begin{equation}{\displaystyle \left\{\begin{array}{rcll}
-\Delta_p\phi_{\Omega_2}&=&\|\omega\|_\infty &\mbox{in} \ \Omega_2, \\
\phi_{\Omega_2}& =& 0 &\mbox{on} \ \partial \Omega_2.\end{array}\right.}\label{pss}
\end{equation}

Now, for any ball $B_\rho\subset\Omega$ with center in $x_0\in\Omega$ and radius $\rho>0$, let us to
denote by $\omega_{\rho}$ the radial function defined by%
\begin{equation}
\omega_\rho(s)=\left\{\begin{array}{ll}%
\min\limits_{|x-x_0|=s}\omega(x), & \text{if }\ 0<s\leq\rho,\\
\omega(x_0), & \text{if }\ s=0.
\end{array}\right.\label{omegarho}
\end{equation}

Thus, by using this function we consider $k_2(B_\rho)$ and $\gamma_\rho$, defined in accordance to the former definitions (\ref{k2}) and (\ref{gamma}), respectively.

At last, we fix $\rho>0$ such that (see Remark \ref{rgamma}, below)
\begin{equation}\label{quo}
\frac{\|\nabla\phi_{\Omega_2}\|_{\infty}}{\|\phi_{\Omega_2}\|_\infty}\leq\gamma_\rho
\end{equation}
and then we set the parameters
\[k_1:=k_1(\Omega_2),\quad k_2:=k_2(B_\rho)\quad\text{and}\quad\gamma=\gamma_\rho.\]
\begin{theorem}\label{maint}
Suppose that, for arbitrary $\delta,M$ such that $0<\delta<M$, the nonlinearity $f$ satisfies:
\begin{enumerate}
\item[$({\rm H}1)$] $0\leq f(u,|v|)\leq k_1M^{p-1}$,\ if\ $0\leq u\leq M$, $|v|\leq\gamma M$;
\item[$({\rm H}2)$] $f(u,|v|)\geq k_2\delta^{p-1}$, \ if\ $\delta\leq u\leq M,$ $|v|\leq\gamma M$;
\item[$({\rm H}3)$] $f(u,|v|)\leq C(|u|)\left(1+|v|^p\right)$ for all $(x,u,v)$, where $C\colon[0,\infty)\rightarrow [0,\infty)$ is increasing.
\end{enumerate}

Then,
problem $(\ref{prob1})$ has a positive solution $u$ such that%
\[\delta\leq\|u\|_\infty\leq M\text{ in }\Omega.\]
\end{theorem}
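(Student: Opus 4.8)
\noindent\emph{Plan of proof.} The strategy is to exhibit an ordered pair $(\underline u,\overline u)$ of sub- and super-solutions for $(\ref{prob1})$ and then apply Theorem \ref{TBoc}. A preliminary remark: the Carath\'eodory function $(x,u,v)\mapsto\omega(x)f(u,|v|)$ satisfies $({\rm H}3)$ of Theorem \ref{TBoc} with the increasing function $\|\omega\|_\infty\,C$ in place of $C$; hence, once an ordered pair has been produced, Theorem \ref{TBoc} yields a solution $u\in C^{1,\tau}(\overline\Omega)$ with $\underline u\le u\le\overline u$.

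\noindent\emph{Super-solution.} I would set $\overline u:=M\,\phi_{\Omega_2}/\|\phi_{\Omega_2}\|_\infty$, restricted to $\Omega$. Since $0<\phi_{\Omega_2}\le\|\phi_{\Omega_2}\|_\infty$ on $\Omega_2$, this gives $0<\overline u\le M$ on $\Omega$; and by $(\ref{quo})$, $|\nabla\overline u|=(M/\|\phi_{\Omega_2}\|_\infty)|\nabla\phi_{\Omega_2}|\le M\,\|\nabla\phi_{\Omega_2}\|_\infty/\|\phi_{\Omega_2}\|_\infty\le\gamma M$ on $\Omega$, so $({\rm H}1)$ applies and gives $f(\overline u,|\nabla\overline u|)\le k_1M^{p-1}$. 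Using the $(p-1)$-homogeneity of $-\Delta_p$ and $(\ref{pss})$, $-\Delta_p\overline u=(M/\|\phi_{\Omega_2}\|_\infty)^{p-1}\|\omega\|_\infty=k_1M^{p-1}\|\omega\|_\infty\ge\omega(x)k_1M^{p-1}\ge\omega(x)f(\overline u,|\nabla\overline u|)$ weakly on $\Omega_2\supset\Omega$, and $\overline u\ge0$ on $\partial\Omega$; hence $\overline u$ is a super-solution of $(\ref{prob1})$ with $\|\overline u\|_\infty\le M$.

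\noindent\emph{Sub-solution.} Comparing $\phi_\rho$ (the solution of $(\ref{prad})$) with $\phi_{\Omega_2}$ on $B_\rho$ via Lemma \ref{cpdd} — using $-\Delta_p\phi_\rho=\omega_\rho\le\|\omega\|_\infty=-\Delta_p\phi_{\Omega_2}$ in $B_\rho$ and $\phi_\rho=0\le\phi_{\Omega_2}$ on $\partial B_\rho$ — yields $\phi_\rho\le\phi_{\Omega_2}$ in $B_\rho$, hence $\|\phi_\rho\|_\infty\le\|\phi_{\Omega_2}\|_\infty$ and so $k_1=k_1(\Omega_2)\le k_1(B_\rho)$. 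Together with $\gamma=\gamma_\rho$ and $k_2=k_2(B_\rho)$, this shows $({\rm H}1)$ and $({\rm H}2)$ imply $({\rm H}1_r)$ and $({\rm H}2_r)$ for $B_\rho$, so Theorem \ref{sball} produces a positive radial solution $u_\rho$ of $(\ref{prob2})$ on $B_\rho$ with $\Psi_\delta\le u_\rho\le\Phi_M$ and $|\nabla u_\rho|\le\Gamma_M$; in particular $\delta\le\|u_\rho\|_\infty\le M$, $\|\nabla u_\rho\|_\infty\le\gamma M$, and $u_\rho$ is radially nonincreasing so that $\partial u_\rho/\partial\nu\le0$ on $\partial B_\rho$. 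I would then let $\underline u$ be the extension of $u_\rho$ by $0$ to $\Omega$; it is Lipschitz, hence in $W^{1,p}(\Omega)\cap L^\infty(\Omega)$, vanishes on $\partial\Omega$, and satisfies $0\le\underline u\le M$, $|\nabla\underline u|\le\gamma M$ on $\Omega$. For $0\le\varphi\in C_0^\infty(\Omega)$, integrating by parts on $B_\rho$, dropping the boundary term $\int_{\partial B_\rho}|\nabla u_\rho|^{p-2}(\partial u_\rho/\partial\nu)\varphi\le0$, and using $\omega_\rho(|x-x_0|)\le\omega(x)$ in $B_\rho$ with $f\ge0$, one obtains $\int_\Omega|\nabla\underline u|^{p-2}\nabla\underline u\cdot\nabla\varphi=\int_{B_\rho}|\nabla u_\rho|^{p-2}\nabla u_\rho\cdot\nabla\varphi\le\int_{B_\rho}\omega_\rho f(u_\rho,|\nabla u_\rho|)\varphi\le\int_\Omega\omega f(\underline u,|\nabla\underline u|)\varphi$, so $\underline u$ is a sub-solution of $(\ref{prob1})$.

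\noindent\emph{Ordering and conclusion.} The subtle point — and the place I expect the main obstacle — is the inequality $\underline u\le\overline u$: a direct comparison of the explicit majorant $\Phi_M=M\phi_\rho/\|\phi_\rho\|_\infty$ of $u_\rho$ with $\overline u=M\phi_{\Omega_2}/\|\phi_{\Omega_2}\|_\infty$ fails in general (the two normalizations pull in opposite directions, even though $\phi_\rho\le\phi_{\Omega_2}$ pointwise on $B_\rho$). I would instead route both through the constant right-hand side $h:=k_1M^{p-1}\|\omega\|_\infty$: since $0\le\underline u\le M$ and $|\nabla\underline u|\le\gamma M$ on $\Omega$, $({\rm H}1)$ gives $f(\underline u,|\nabla\underline u|)\le k_1M^{p-1}$, so the previous estimate upgrades to $\int_\Omega|\nabla\underline u|^{p-2}\nabla\underline u\cdot\nabla\varphi\le\int_\Omega h\varphi$, i.e. $\underline u$ is a sub-solution of $-\Delta_p v=h$ in $\Omega$ with $v=0$ on $\partial\Omega$; meanwhile $\overline u|_\Omega$ is an exact solution of $-\Delta_p v=h$ in $\Omega$ with boundary value $\ge0$, so the comparison principle for $(\ref{hdiric})$ gives $\underline u\le\overline u$ in $\Omega$. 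Thus $(\underline u,\overline u)$ is ordered, Theorem \ref{TBoc} furnishes $u\in C^{1,\tau}(\overline\Omega)$ solving $(\ref{prob1})$ with $\underline u\le u\le\overline u$, and then $\|u\|_\infty\le\|\overline u\|_\infty\le M$ while $\|u\|_\infty\ge\|\underline u\|_\infty=\|u_\rho\|_\infty\ge\delta>0$; since $-\Delta_p u=\omega f(u,|\nabla u|)\ge0$, the strong maximum principle gives $u>0$ in $\Omega$, finishing the proof. (A secondary technical issue is confirming that the zero-extension $\underline u$ is a bona fide weak sub-solution across $\partial B_\rho$; this is exactly where the sign $\partial u_\rho/\partial\nu\le0$ is used.)
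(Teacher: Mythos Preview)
Your proof is correct and follows the paper's argument essentially step for step: the same super-solution $\overline u=M\phi_{\Omega_2}/\|\phi_{\Omega_2}\|_\infty$, the same zero-extension of the radial solution $u_\rho$ from Theorem~\ref{sball} as sub-solution, and the same ordering argument via comparison with the constant right-hand side $h=k_1M^{p-1}\|\omega\|_\infty$ (the paper carries out this last comparison on $B_\rho$ rather than on $\Omega$, but the idea is identical). The only place the paper is slightly more careful is in verifying that the zero-extension is a weak sub-solution: it isolates this as Lemma~\ref{lemasub} and excises a small ball around the interior critical point $x_0$ (where $\Delta_p$ degenerates) before applying the Divergence Theorem, a detail your phrase ``integrating by parts on $B_\rho$'' glosses over but which is routine for the radial $u_\rho$.
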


\begin{remark}\label{rgamma}\rm We would like to observe that the inequality (\ref{quo}) always occurs, if $\rho$ is taken sufficiently small such that
\begin{equation}
\frac{\|\nabla\phi_{\Omega_2}\|_\infty}{\|\phi_{\Omega_2}\|_\infty}\leq\frac{1}{\rho}.\label{*}%
\end{equation}
In fact, we have the gross estimate
\[\frac{1}{\rho}\leq\gamma_{\rho},\ \text{ for any }B_{\rho}\subset\Omega
\]
since $\gamma_\rho=\frac{\|\nabla \phi_\rho\|_\infty}{\|\phi_\rho\|_\infty}$ and
\[\|\phi_\rho\|_\infty=\phi_\rho(0)=-\int_0^\rho\phi_\rho'(s)\,ds=\int_0^\rho |\phi_\rho'(s)|\,ds\leq \rho\|\nabla\phi_\rho\|_\infty.\]

We supposed that the weight function $\omega$ has isolated zeroes. As mentioned, this assumption is not necessary: it will only be used in the discussion about the best possible choice for the constants $k_1$ and $k_2$, which is done in Subsection \ref{SSRadial}.
\end{remark}\vspace{.5cm}

In Section \ref{App} we give examples of $\Omega_2$ and $\rho$ satisfying (\ref{quo}). There, we consider the cases $\Omega_2=B_R\supset\Omega$ and, supposing $\Omega$ convex, $\Omega_2=\Omega$ and present better estimates than (\ref{*}) to choose $\rho$.

The obtention of a sub-solution for problem (\ref{prob1}) is based on the following general result:
\begin{lemma}\label{lemasub}Let $\Omega$ and $\Omega_1$ be smooth domains in $\mathbb{R}^N$ $(N>1)$, with $\Omega_1 \subset \Omega$.
Let $u_1 \in C^{1,\tau}\left(\overline{\Omega_1}\right)$ be a positive solution of
\begin{equation*} 
{\displaystyle \left\{\begin{array}{rcll}
-\Delta_pu_1 &=& f_1(x,u_1,\nabla u_1) &\mbox{in} \ \Omega_1, \\
u_1& =& 0 &\mbox{on} \ \partial\Omega_1,\\
\end{array}\right.}\end{equation*}
where the nonnegative nonlinearity $f_1$ is continuous.

Suppose also that the set
\[Z_{1}=\left\{  x\in\Omega_{1}:\nabla u_1=0\right\}\]
is finite.

Then the extension
\begin{equation*}{\displaystyle\underline{u} (x)=\left\{\begin{array}{rl}
u_1(x),& \  \mbox{if }  \ x \in\ \overline{\Omega_1}, \\
0, &\ \mbox{if }  \ x \in\ \overline{\Omega} \setminus \Omega_1\\
\end{array}\right.}
\end{equation*}
is a sub-solution of
\begin{equation*}{\displaystyle \left\{\begin{array}{rcll}
-\Delta_pu &=& f(x,u,\nabla u) &\mbox{in} \ \Omega, \\
u& =& 0 &\mbox{on} \ \partial\Omega\\
\end{array}\right.}
\end{equation*}
for all continuous nonlinearities $f\geq 0$ such that $f_1(x,u,\nabla u)\leq f(x,u,\nabla u)$ in $\Omega_1$.
\end{lemma}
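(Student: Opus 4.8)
The goal is to show that the extension $\underline{u}$ of $u_1$ by zero outside $\Omega_1$ is a (weak) sub-solution of the Dirichlet problem on $\Omega$. The plan is to verify the two requirements in Definition~\ref{defsubsupsol}: the boundary inequality $\underline{u}\le 0$ on $\partial\Omega$, and the integral inequality $\int_\Omega|\nabla\underline{u}|^{p-2}\nabla\underline{u}\cdot\nabla\phi\,dx\le\int_\Omega f(x,\underline{u},\nabla\underline{u})\phi\,dx$ for every $\phi\in C_0^\infty(\Omega)$ with $\phi\ge0$. The boundary condition is immediate since $\underline{u}\equiv 0$ near $\partial\Omega$ (as $\Omega_1$ is compactly contained, or at least $\underline{u}=0$ on $\partial\Omega$ by construction). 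First I would check that $\underline{u}\in W^{1,p}(\Omega)\cap L^\infty(\Omega)$: since $u_1\in C^{1,\tau}(\overline{\Omega_1})$ vanishes on $\partial\Omega_1$, its zero-extension lies in $W^{1,p}(\Omega)$ with $\nabla\underline{u}=\nabla u_1\,\chi_{\Omega_1}$ a.e., and this is the crucial regularity fact that makes the rest of the argument go through.

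Next I would split the integral. Write $\Omega=\Omega_1\cup(\overline\Omega\setminus\Omega_1)$ up to the null set $\partial\Omega_1$. On $\overline\Omega\setminus\Omega_1$ we have $\underline{u}=0$ and $\nabla\underline{u}=0$, so the left-hand side contributes nothing there; and since $f\ge 0$ and $\phi\ge0$, the right-hand side contributes a nonnegative amount there. On $\Omega_1$, using that $u_1$ solves its own equation, we would like to use $\phi|_{\Omega_1}$ as a test function. The subtlety is that $\phi$ restricted to $\Omega_1$ need not have compact support in $\Omega_1$; however the weak formulation for $u_1$ extends from $C_0^\infty(\Omega_1)$ to $W_0^{1,p}(\Omega_1)$ by density, and one checks that $\phi\,\chi_{\Omega_1}$ — more precisely, the trace argument — gives a valid test function because $u_1$ vanishes continuously on $\partial\Omega_1$; alternatively, one notes that $\phi$ itself, viewed globally, is the relevant test object and the integration-by-parts on $\Omega_1$ is justified since $\underline u\in W^{1,p}(\Omega)$ globally and $-\Delta_p\underline u$ agrees distributionally on $\Omega_1$ with $f_1(x,u_1,\nabla u_1)$ plus a nonnegative boundary measure concentrated on $\partial\Omega_1$. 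Assembling the pieces:
\[
\int_{\Omega}|\nabla\underline{u}|^{p-2}\nabla\underline{u}\cdot\nabla\phi\,dx
=\int_{\Omega_1}|\nabla u_1|^{p-2}\nabla u_1\cdot\nabla\phi\,dx
\le\int_{\Omega_1}f_1(x,u_1,\nabla u_1)\phi\,dx
\le\int_{\Omega}f(x,\underline{u},\nabla\underline{u})\phi\,dx,
\]
where the first inequality is the key point and the last uses $f_1\le f$ on $\Omega_1$, $f\ge0$, $\phi\ge0$, and $f(x,0,0)\ge0$ on the complement.

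The main obstacle is justifying that the ``boundary term'' picked up when integrating by parts over $\Omega_1$ has the right sign — i.e., that extending by zero does not create a distributional contribution on $\partial\Omega_1$ pointing the wrong way. Heuristically, the outward normal derivative of the positive solution $u_1$ on $\partial\Omega_1$ is $\le 0$, so $-\Delta_p\underline u$ picks up a nonnegative singular measure on $\partial\Omega_1$, which only helps the sub-solution inequality. This is exactly where the hypothesis that $Z_1=\{x\in\Omega_1:\nabla u_1=0\}$ is finite enters: it guarantees $|\nabla u_1|^{p-2}\nabla u_1$ is genuinely $C^1$ up to the boundary away from finitely many interior points (recall $\phi_\rho\in C^{1,1/(p-1)}$ for $p>2$), so the flux integral $\int_{\partial\Omega_1}|\nabla u_1|^{p-2}\frac{\partial u_1}{\partial n}\phi\,dS$ is well-defined and $\le 0$ by Hopf-type considerations, and the finitely many critical points are removable. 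I would make this rigorous by a cutoff/limiting argument: remove small balls around the points of $Z_1$ and around $\partial\Omega_1$, integrate by parts on the resulting region where everything is smooth, and pass to the limit, controlling the vanishing-radius boundary integrals using the $C^{1,\tau}$ bound on $u_1$ and the fact that $\phi$ is bounded. The remaining steps — measurability, the $L^\infty$ and $W^{1,p}$ membership, and the sign bookkeeping on $\Omega\setminus\Omega_1$ — are routine.
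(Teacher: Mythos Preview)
Your proposal is correct and follows essentially the same route as the paper: excise small balls around the finitely many critical points in $Z_1$, apply the Divergence Theorem on $\Omega_1\setminus\bigcup B_\varepsilon$ (where $|\nabla u_1|>0$ makes the operator nondegenerate and $|\nabla u_1|^{p-2}\nabla u_1$ is $C^1$), use Hopf's Lemma to get the nonpositive sign of the flux integral $\int_{\partial\Omega_1}\phi|\nabla u_1|^{p-2}\frac{\partial u_1}{\partial\eta}\,dS$, and let $\varepsilon\to 0$ using the $C^{1,\tau}$ bound to kill the integrals over $\partial B_\varepsilon$ and $B_\varepsilon$. The only minor difference is that you also propose removing a neighborhood of $\partial\Omega_1$, whereas the paper keeps that boundary intact and handles its contribution directly via Hopf; your variant would work too but is not needed.
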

\begin{proof}This proposition is a consequence of the Divergence Theorem combined with the Hopf's Lemma (which states that $\frac{\partial u}{\partial\eta}<0$ on $\partial\Omega_1$, if $\eta$ denotes the unit outward normal to $\partial\Omega_1$, see \cite{SAKAGUCHI}, Lemma A.3).
Really, if $\phi\in C_{0}^{\infty}(\Omega)$ and $\phi\geq0$, by assuming (without loss of generality) that $Z_1=\{x_0\}$,
then
\begin{eqnarray}
\int_\Omega\left|\nabla\underline{u}\right|^{p-2}\nabla \underline{u}\cdot\nabla\phi\, dx&=&
\int_{\Omega_1\setminus B_{\varepsilon}}\left|\nabla u_1\right|^{p-2}\nabla u_1\cdot\nabla\phi\, dx\notag\\
&&\ +\int_{B_{\varepsilon}}\left|\nabla u_1\right|^{p-2}\nabla u_1\cdot\nabla\phi\, dx \label{aux1}%
\end{eqnarray}
where $B_{\varepsilon}\subset\Omega_{1}$ is a ball centered in $x_{0}$ with
radius $\varepsilon>0.$

Since $u_1\in C^2\left(\Omega\setminus B_\varepsilon\right)$ and $|\nabla u_1|>0$ in $\Omega\setminus B_\varepsilon$,
it follows from the Divergence Theorem that%
\[\int_{\partial\Omega_1\cup\partial B_\varepsilon}\phi|\nabla u_1|^{p-2}\nabla u_1\cdot {\eta}\,dS_{x}=\hspace{6.7cm}\]
\begin{align*}\hspace{1.5cm}   =&\int_{\Omega_1\setminus B_\varepsilon}\operatorname{div}\left(\phi\left|\nabla u_1\right|^{p-2}\nabla u_1\right)  dx\\
=&\int_{\Omega_1\setminus B_\varepsilon}\left|\nabla u_1\right|^{p-2}\nabla u_1\cdot\nabla\phi\, dx+
\int_{\Omega_1\setminus B_\varepsilon}\phi\operatorname{div}\left(\left|\nabla u_1\right|^{p-2}\nabla u_1\right)  dx\\
=&\int_{\Omega_1\setminus B_\varepsilon}\left|\nabla u_1\right|^{p-2}\nabla u_1\cdot\nabla\phi\,dx-\int_{\Omega_1\setminus B_\varepsilon}\phi f_1\left(x,u_1,\nabla u_1\right)\, dx.
\end{align*}

Therefore,
\[\int_{\Omega_1\setminus B_\varepsilon}\left|\nabla u_1\right|^{p-2}\nabla u_1\cdot\nabla\phi dx=\int_{\Omega_1\setminus B_\varepsilon}\phi f_1\left(x,u_1,\nabla u_1\right)dx+I_1+I_2,
\]
where%
\[I_1:=\int_{\partial\Omega_1}\phi\left|\nabla u_1\right|^{p-2}\frac{\partial u_1}{\partial\eta}\,dS_{x}\leq0
\]
and%
\[I_2:=-\int_{\partial B_\varepsilon}\phi\left|\nabla u_1\right|^{p-2}\frac{\partial u_1}{\partial\eta}\,dS_{x}.
\]

The regularity of $u_1$ implies that $\left|\left|\nabla u_1\right|^{p-2}\frac{\partial u_1}{\partial\eta}\right|=\left|\nabla u_1\right|^{p-1}\leq C$ for some positive constant $C$ which does not depend on $u_1$.
Thus,
\[\left|I_2\right|\leq C\left\|\phi\right\|_{\infty}\left|\partial
B_\varepsilon\right|\rightarrow 0\ (\text{when }\varepsilon \rightarrow 0).\]

Consequently,%
\begin{eqnarray*}
\lim_{\varepsilon\rightarrow0}\int_{\Omega_1\setminus B_\varepsilon}|\nabla u_1|^{p-2}\nabla u_1\cdot\nabla\phi\, dx&\leq&\lim_{\varepsilon\rightarrow0}\int_{\Omega_1\setminus B_\varepsilon}\phi f_1\left(x,u_1,\nabla u_1\right)  dx\\
&=&\int_{\Omega_1}\phi f_1(x,u_1,\nabla u_1)\,  dx.
\end{eqnarray*}

On the other hand,
\[\left|\int_{B_\varepsilon}\left|\nabla u_1\right|^{p-2}\nabla u_1\cdot\nabla\phi dx\right|\leq\int_{B_\varepsilon}\left|\nabla u_1\right|^{p-1}\left|\nabla\phi\right| dx\leq C\left|
\nabla\phi\right|_\infty\left|B_{\varepsilon}\right|\longrightarrow 0,\]
when $\varepsilon\to 0$.

Now, by making $\varepsilon\rightarrow0$ in (\ref{aux1}) we obtain%
\[\int_\Omega|\nabla\underline{u}|^{p-2}\nabla\underline{u}\cdot\nabla\phi\, dx\leq\int_{\Omega_1}\phi f_1(x,u_1,\nabla
u_1)\,dx\leq\int_{\Omega}\phi f(x,\underline{u},\nabla \underline{u})\,  dx.
\]

\vspace*{-.8cm}\end{proof}
\begin{remark}\rm 
(i) The hypothesis on $Z_1$ can be obtained if we suppose, for instance, $0\leq f(x,t,v)$ and, for all $t>0$, $\{(x,v)\,:\, f(x,t,v)=0\}$ is a finite set. (Of course, the more interesting case occurs when $f(x,0,v)=0$.) (ii) See H. Lou \cite{LOU} for further information on the singular set $Z_1$.
\end{remark}
\begin{proof*} From Remark \ref{rcpdd} follows that
$$k_1(\Omega_2) \leq k_1(B_\rho).$$

So, if $f$ satisfies the hypotheses (H1) and (H2), it also satisfies the hypotheses of Theorem \ref{sball}. By applying Theorem \ref{sball}, there exists a positive radial function
$u_\rho\in C^{1,\tau}\left(\overline{B_\rho}\right)$ such that
\begin{equation*}{\displaystyle \left\{\begin{array}{rcll}
-\Delta_p u_\rho &=& \omega_\rho(|x-x_0|)f(u_\rho, |\nabla u_\rho|) &\mbox{in} \ B_\rho(x_0), \\
u_\rho& =& 0 &\mbox{on} \ \partial B_\rho(x_0).\\ \end{array}\right.} \end{equation*}
Moreover, the only critical point of $u_\rho$ occurs at $x=x_0$.

It follows from Lemma \ref{lemasub} that
\begin{equation*}\underline{u}(x)=\left\{\begin{array}{ll}
u_\rho(x),& \  \mbox{if}  \ \ x \in\ B_\rho, \\
0, &\ \mbox{if}  \ x \in \Omega \setminus B_\rho\\
\end{array}\right.
\end{equation*}
is a sub-solution of problem (\ref{prob1}).

Define
\[\overline{u}=M\frac{\phi_{\Omega_2}}{\|\phi_{\Omega_2}\|_\infty}.\]

Of course, $\overline{u}\leq M$ and $\|\nabla
\overline{u}\|_\infty=M\frac{\|\nabla
\phi_{\Omega_2}\|_\infty}{\|\phi_{\Omega_2}\|_\infty}\leq
\gamma_\rho M$, by hypothesis. So, it follows from (H1) that
\[f(\overline{u},|\nabla \overline{u}|)\leq k_1(\Omega_2)M^{p-1}.\]

Moreover,
\begin{equation}\label{eq1}-\Delta_p
\overline{u}=-\Delta_p\left(M\frac{\phi_{\Omega_2}}{\|\phi_{\Omega_2}\|_\infty}
\right)=k_1(\Omega_2)M^{p-1}\|\omega\|_\infty,\end{equation}
and then, by (H2) \[ -\Delta_p \overline{u}\geq
f(\overline{u},|\nabla \overline{u}|)\,\|\omega\|_\infty \] and,
since $\overline{u}>0$ on $\partial\Omega$, $\overline{u}$ is a
super-solution of (\ref{prob1}).

Moreover, the pair $(\underline{u}, \overline{u})$ is ordered. In
fact, if $x \in \Omega \backslash B_\rho$ the result is immediate.
Otherwise we know that, $$\underline{u}=u_\rho \in C=\left\{u \in
C^1\left(\overline{B_\rho} \right): 0 \leq u\leq M, \
\textrm{and} \ \|\nabla u\|_\infty \leq \gamma_\rho M \right\},$$
and therefore, by (H1) , $f(u_\rho, |\nabla u_\rho|) \leq
k_1(\Omega_2)M^{p-1}$ and then
$$-\Delta_p \underline{u} = \omega_\rho f(u_\rho, |\nabla u_\rho|)
\leq  k_1(\Omega_2)M^{p-1}\|\omega\|_\infty=-\Delta_p \left(M
\frac{\phi_{\Omega_2}}{\|\phi_{\Omega_2}\|_\infty}
\right)=-\Delta_p \overline{u}.
$$

Moreover $$u_\rho=0 \leq M
\frac{\phi_{\Omega_2}}{\|\phi_{\Omega_2}\|_\infty} =
\overline{u}$$ on $\partial B_\rho$. We are done, since follows from the comparison principle that
$\underline{u}\leq \overline{u}$ in $B_\rho \subset \Omega$.
\end{proof*}
\section{Applications}\label{App}
In this section we choose two concrete domains $\Omega_2$ for application of Theorem \ref{maint}. In the first example, we consider a ball $B_R(x_1)=\Omega_2$ so that $\Omega\subset B_R$. In the second, we consider $\Omega_2=\Omega$ and use a result by Payne and Philippin \cite{PAYNE AND PHILIPPIN}. For this, we need to suppose that $\Omega$ is convex.

\subsection{About the constants $k_1$ and $k_2$}\label{ck1k2}
Here we establish some results about the constants $k_1(\Omega)$ and $k_2(B_\rho)$ for $B_\rho \subset \Omega$. Our comments are based in remarks made in \cite{HGZ}. First we observe that, according to Remark \ref{rcpdd}, we have $k_1(\Omega_2) \leq k_1(\Omega_1)$, if $\Omega_1\subset\Omega_2$.

Since
\begin{equation*}
k_2(B_\rho)=\displaystyle{\left[\max_{0 \leq r \leq \rho}\int_r^\rho
\left(\int_0^r
K(s,\theta)ds\right)^{\frac{1}{p-1}}d\theta\right]^{1-p}},
\end{equation*}
is easy to conclude that $k_2(B_\rho) \to \infty$ if $\rho \to 0$. Also, larger values of $\rho$ imply smaller values of $k_2(B_\rho)$ and, as we will see, also smaller values of $\gamma_\rho$. In this paper, for $x_0\in\Omega$, we choose $B_\rho$ as the largest ball centered at $x_0$ and contained in $\Omega$.

By Lemma \ref{lema comparacao entre k1 de omega e k2 da bola}, we have $k_1(\Omega) \leq k_2(B_\rho)$ for all
$B_\rho \subset \Omega$. Therefore

$$k_1(\Omega) \leq \Lambda \colon =\inf\{k_2(B_\rho): B_\rho \subset \Omega \}.$$

In the special case $\omega \equiv 1$ the constant $\Lambda$ can
be obtained since

$$k_2(B_\rho)=\displaystyle{\left[\max_{0 \leq r \leq \rho} \left( \int_t^\rho \theta^{\frac{1-N}{p-1}}d\theta \right) \left(\int_0^t
s^{N-1}ds\right)^{\frac{1}{p-1}}\right]^{1-p}}.$$

In this situation we have $$k_2(B_\rho)= \frac{C_{N,p}}{\rho^p}$$
where

\begin{equation}
 {\displaystyle C_{N,p}= \left\{
\begin{array}{lll}
 \frac{p^p}{(p-1)^{p-1}}e^{p-1} & \textrm{if}  &  N=p;\\
\frac{N^p}{(p-1)^{p-1}} \left( \frac{p}{N} \right)^{\frac{p(p-1)}{p-N}} & \textrm{if}   & N \neq p.\\
\end{array}\right.}
\end{equation}
In this case,
$$\Lambda=\frac{C_{n,p}}{\rho_*^p}, \ \textrm{ where} \ \rho_*:=\sup \{\rho: B_\rho(x_0) \subset \Omega, x_0 \in \Omega \}.$$

\subsection{Radial Supersolution}\label{SSRadial}
For all $x\in\Omega$, let $d(x)={\rm dist}(x,\partial \Omega)$. We denote by $r_*=\sup_{x\in\Omega}d(x)$. Let $B_{r_*}$ be a ball with center at $x_0\in\Omega$ such that $B_{r_*}\subset\Omega$.

Choose $R$ such that $\Omega\subset B_R$, where $B_R$ is a ball with center at $x_1\in\Omega$ and
let $\phi_R \in C^{1,\alpha}(\overline{B_R(x_1)})\cap W_0^{1,p}(B_R(x_1))$ be the unique positive solution of
\begin{equation}{\displaystyle \left\{\begin{array}{rcll}
-\Delta_p \phi_R &=&\|\omega\|_\infty &\mbox{in} \ B_R(x_1), \\
\phi_R& =& 0 &\mbox{on} \ \partial B_R(x_1), \end{array}\right.}\label{tcp}
\end{equation}
and consider the positive constant $k_1(\Omega_2)=k_1(B_R)=\|\phi_R\|_{\infty}^{-(p-1)}$.

We define, as in Theorem \ref{maint},
$$\overline{u}:= M\frac{\phi_R}{\ \|\phi_R\|_\infty} \in C^{1,\alpha}\left( \overline{B_R(x_1)}\right)\cap W_0^{1,p}(B_R(x_1)).$$

Of course, $0<\overline{u}\leq M$. According to Section \ref{SRad}, we have
\begin{eqnarray}\label{estquo}
\phi_R(r) & = & \int_r^R \varphi_q \left(\int_0^\theta \left(\frac{s}{\theta}\right)^{N-1}\|\omega\|_\infty\, ds \right)d\theta\notag\\
&=& \|\omega\|_\infty^{\frac{1}{p-1}}\int_r^R \varphi_q \left(\frac{1}{{\theta}^{N-1}}\int_0^\theta {s}^{N-1}
 ds \right)d\theta\notag\\
&=& \|\omega\|_\infty^{\frac{1}{p-1}}\int_r^R\left(\frac{\theta}{N}\right)^{\frac{1}{p-1}}d\theta\notag\\
& =& \frac{p-1}{p} \left(\frac{\|\omega\|_\infty}{N}\right)^{\frac{1}{p-1}}\left(R^{\frac{p}{p-1}}-r^{\frac{p}{p-1}} \right).
\end{eqnarray}

On the other hand, we have $\nabla \phi_R(x)=\phi'_R(r)
\frac{x-x_0}{r}$, from what follows $|\nabla \phi_R(x)|=|\phi'_R(r)|$. Thus,
$$\|\nabla \phi_R\|_\infty=|\phi'_R(R)|=\left(\int_0^R\left(\frac{s}{R}\right)^{N-1}\|\omega\|_\infty ds\right)^{\frac{1}{p-1}}
=\left(\frac{\|\omega\|_\infty}{N}\right)^{\frac{1}{p-1}}R^{\frac{1}{p-1}}$$
and
\begin{equation}\label{q/R}\frac{\|\nabla \phi_R \|_\infty}{\|\phi_R\|_\infty}= \frac{p}{p-1} R^{\frac{1}{p-1}-\frac{p}{p-1}}=\frac{q}{R}.\end{equation}
So, we need to choose $\rho>0$ such that $B_\rho\subset\Omega$ and
$$\frac{q}{R}<\gamma_\rho,$$
in order to have
$$0 \leq |\nabla \overline{u}| = M \frac{|\nabla \phi_R |}{\|\phi_R\|_\infty} \leq M \frac{\|\nabla \phi_R \|_\infty}{\|\phi_R\|_\infty}= \frac{q}{R}M \leq \gamma_\rho M.$$

To choose $\rho$, let us consider the possibilities
\begin{enumerate}
\item [$(i)$] $r_*\leq \frac{R}{q}\ (<R)$.

We choose $\rho=r_*$, because
\[\frac{\|\nabla \phi_R\|_\infty}{\|\phi_R\|_\infty}=\frac{q}{R}\leq \frac{1}{r_*}=\frac{1}{\rho}\leq\gamma_{\rho}.\]
\item [$(ii)$] $\frac{R}{q}\leq r_*\ (<R)$.

We choose $\rho=\frac{R}{q}$, since
\[\frac{\|\nabla \phi_R\|_\infty}{\|\phi_R\|_\infty}=\frac{q}{R}=\frac{1}{\rho}\leq\gamma_{\rho}.\]
\end{enumerate}


In the special case $\omega_\rho\equiv 1$, we can always choose $\rho=r_*$, since
\[\frac{\|\nabla \phi_R\|_\infty}{\|\phi_R\|_\infty}=\frac{q}{R}\leq \frac{q}{r_*}=\frac{q}{\rho}=\gamma_{\rho}.\]
This value of $\rho$ corresponds to the smallest values of $k_2(B_\rho)$ and $\gamma_\rho$. The best value for $k_1(B_R)$ is obtained when $R$ is the smallest radius such that $B_R(x_1)\supset\Omega$ for $x_1\in\Omega$.
\subsection{Applying a maximum principle of Payne and Phillipin}
If we choose $\Omega_2=\Omega$, we need to suppose that $\Omega$ is convex to control the quotient (\ref{quo}). For this, we consider  the torsional creep problem
\begin{equation}\label{torsionalcreep}
{\displaystyle \left\{\begin{array}{rcll}
-\Delta_p \psi_\Omega &=& 1 &\mbox{in} \ \Omega, \\
\psi_\Omega& =& 0 &\mbox{on} \ \partial\Omega.\\
\end{array}\right.}
\end{equation}
For more information on the torsional creep problem, see Kawohl \cite{KAWOHL}.

In order to estimate the quotient (\ref{quo}),  we state a maximum principle of Payne and Philippin \cite{PAYNE AND PHILIPPIN}:
\begin{theorem}\label{teorema de payne e philippin}
Let $\Omega \subset \mathbb{R}^N$ be a convex domain such that $\partial\Omega$ is a $C^{2,\alpha}$ surface. If $u=const.$ on $\partial \Omega$, then
\begin{equation}\label{epp}
\Phi(x)= 2\frac{p-1}{p}|\nabla \psi_\Omega|^p+2\psi_\Omega
\end{equation}
takes its maximum value at a critical point of $\psi_\Omega$.
\end{theorem}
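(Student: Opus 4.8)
The plan is to show that $\Phi$ is a subsolution of a suitable second-order elliptic equation on the open set where $\nabla\psi_\Omega\neq 0$, and then to combine the strong maximum principle with Hopf's lemma and the convexity of $\Omega$ to exclude a maximum on $\partial\Omega$; because the $p$-Laplacian is degenerate at the critical set of $\psi_\Omega$, every maximum-principle argument must be localized away from that set.

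First I would carry out the ``$P$-function'' computation. Writing $\psi=\psi_\Omega$ and using the torsional creep equation $-\Delta_p\psi=1$ in the expanded form $|\nabla\psi|^{p-2}\left(\Delta\psi+(p-2)|\nabla\psi|^{-2}\psi_i\psi_j\psi_{ij}\right)=-1$, I would differentiate this identity, multiply by $\psi_k$ and sum, and reorganize the terms (the precise constants in $\Phi$, including the factor $2$, being chosen exactly so that this works) into an inequality of the form
\[
a_{ij}(\nabla\psi,\nabla^2\psi)\,\partial_{ij}\Phi+b_i(\nabla\psi,\nabla^2\psi)\,\partial_i\Phi\geq 0\quad\text{on }\{\nabla\psi\neq 0\},
\]
where $(a_{ij})$ is the linearization of the $p$-Laplacian at $\psi$ (strictly elliptic on that set) and the leftover nonnegative ``Bochner'' term is dominated using a Cauchy--Schwarz estimate of the type $|\nabla^2\psi|^2\geq(\Delta\psi)^2/N$, sharpened to absorb the $(p-2)$-contribution. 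This differential inequality for $\Phi$ is the analytic heart of the argument.

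On $\{\nabla\psi\neq 0\}$ the operator above is uniformly elliptic on compact subsets, so the classical strong maximum principle shows that $\Phi$ cannot attain an interior maximum there unless it is locally constant. Hence a maximum of $\Phi$ over $\overline\Omega$ is attained either at a critical point of $\psi$, which necessarily lies in $\Omega$ (Hopf's lemma for the torsion problem gives $\partial\psi/\partial\eta<0$ on $\partial\Omega$, so $\nabla\psi\neq 0$ there), or at a boundary point that is not a critical point. To rule out the second possibility I would pass to boundary normal coordinates: since $\psi$ is constant on $\partial\Omega$, we have $\nabla\psi=(\partial\psi/\partial\eta)\,\eta$ on $\partial\Omega$, and splitting $\Delta\psi$ into its normal and tangential parts brings in the curvatures of $\partial\Omega$; substituting this into both the PDE and the expression for $\partial\Phi/\partial\eta$ and simplifying yields, on $\partial\Omega$, $\partial\Phi/\partial\eta=-c\,(N-1)H|\nabla\psi|^{p}$ for a positive constant $c$, where $H$ is the mean curvature of $\partial\Omega$. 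Convexity of $\Omega$ makes $H\geq 0$ (in the convention in which the principal curvatures of $\partial\Omega$ are nonnegative), hence $\partial\Phi/\partial\eta\leq 0$ on $\partial\Omega$.

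Finally, if $\Phi$ attained its maximum at a noncritical boundary point $x_0$ while being nonconstant near $x_0$, then Hopf's lemma---legitimate because the linearized operator is uniformly elliptic in a neighbourhood of $x_0$ contained in $\{\nabla\psi\neq 0\}$, where $\psi$ is as regular as $\partial\Omega$ permits---would force $\partial\Phi/\partial\eta(x_0)>0$, contradicting the boundary bound just derived. Therefore the maximum of $\Phi$ is attained at a critical point of $\psi_\Omega$. I expect the first step to be the main obstacle: producing the precise differential inequality for $\Phi$ and controlling the Hessian remainder with the right constants so that the sign is correct for every $1<p<\infty$; once that is in hand, the boundary computation and the maximum-principle bookkeeping are routine, the only delicate point being the consistent localization away from the critical set.
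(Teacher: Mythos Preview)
The paper does not prove this theorem at all: it is quoted as a black box from Payne and Philippin \cite{PAYNE AND PHILIPPIN} and used only to derive the gradient estimate in Lemma~\ref{lema quociente}. There is therefore nothing in the paper to compare your argument against.

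That said, your outline is essentially the original $P$-function argument of Payne and Philippin: one derives a second-order elliptic differential inequality for $\Phi$ on the set $\{\nabla\psi_\Omega\neq 0\}$, applies the strong maximum principle there, and then uses the boundary computation together with convexity (nonnegative mean curvature) and Hopf's lemma to exclude a boundary maximum. Your identification of the main technical point---obtaining the correct sign in the interior differential inequality for all $1<p<\infty$---is accurate; this is where the specific coefficients $2(p-1)/p$ and $2$ in the definition of $\Phi$ are forced. The regularity issue you flag (degeneracy of the $p$-Laplacian at critical points) is genuine and is handled, as you indicate and as the paper itself notes just after Lemma~\ref{lema quociente}, by Sakaguchi's regularization $-\operatorname{div}((\varepsilon+|\nabla\phi_\varepsilon|^2)^{(p-2)/2}\nabla\phi_\varepsilon)=1$ and passage to the limit $\varepsilon\to 0$.
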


\begin{lemma}\label{lema quociente} If $\Omega$ satisfies $({H}4)$, then
$$\|\nabla \psi_\Omega\|_\infty \leq \left(q\|\psi_\Omega\|_\infty\right)^{\frac{1}{p}},$$
what yields
$$\frac{\|\nabla \psi_\Omega\|_\infty}{\|\psi_\Omega\|_\infty} \leq \frac{q^{\frac{1}{p}}}{\|\psi_\Omega\|_\infty^{\frac{1}{q}}}.$$
\end{lemma}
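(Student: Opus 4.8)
The plan is to apply the maximum principle of Payne and Philippin (Theorem \ref{teorema de payne e philippin}) directly to the auxiliary function $\Phi$ built from the torsional creep solution $\psi_\Omega$. First I would recall that, by the strong maximum principle for (\ref{torsionalcreep}), $\psi_\Omega>0$ in $\Omega$ and $\psi_\Omega=0$ on $\partial\Omega$, so $\psi_\Omega$ attains the value $\|\psi_\Omega\|_\infty$ at some interior point $x^*$, which is necessarily a critical point of $\psi_\Omega$.

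Next, since $\Omega$ satisfies (H4) (convex with $C^{2,\alpha}$ boundary) and $\psi_\Omega\equiv 0$ is constant on $\partial\Omega$, Theorem \ref{teorema de payne e philippin} applies: the function $\Phi(x)=2\frac{p-1}{p}|\nabla\psi_\Omega|^p+2\psi_\Omega$ attains its maximum at a critical point of $\psi_\Omega$. At such a point the gradient term vanishes, so the maximum value of $\Phi$ equals $2\psi_\Omega$ evaluated there, which is at most $2\|\psi_\Omega\|_\infty$. Hence, for every $x\in\Omega$,
$$2\frac{p-1}{p}|\nabla\psi_\Omega(x)|^p+2\psi_\Omega(x)=\Phi(x)\leq 2\|\psi_\Omega\|_\infty.$$

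Since $\psi_\Omega(x)\geq 0$, this gives $\frac{p-1}{p}|\nabla\psi_\Omega(x)|^p\leq\|\psi_\Omega\|_\infty$, that is $|\nabla\psi_\Omega(x)|^p\leq q\|\psi_\Omega\|_\infty$ with $q=p/(p-1)$. Taking the supremum over $x\in\Omega$ and then the $p$-th root yields $\|\nabla\psi_\Omega\|_\infty\leq(q\|\psi_\Omega\|_\infty)^{1/p}$. Dividing both sides by $\|\psi_\Omega\|_\infty$ and using $1-\tfrac1p=\tfrac1q$ gives the stated quotient estimate. I do not expect a real obstacle here; the only points needing a word of care are the use of the strong maximum principle to ensure the interior maximum of $\psi_\Omega$ is a critical point (so that Theorem \ref{teorema de payne e philippin} can be combined with it), and the bookkeeping of the exponents $1-1/p=1/q$.
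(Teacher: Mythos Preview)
Your argument follows exactly the same route as the paper's: apply the Payne--Philippin maximum principle to $\Phi$, use that at a critical point of $\psi_\Omega$ the gradient term vanishes so $\max\Phi\le 2\|\psi_\Omega\|_\infty$, drop the nonnegative term $2\psi_\Omega$, and rearrange exponents. The logic and the computations match.

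The one point the paper adds, and which you do not mention, is a word about why Theorem~\ref{teorema de payne e philippin} can be invoked at all: the $p$-Laplacian degenerates where $\nabla\psi_\Omega=0$, so $\psi_\Omega$ is in general only $C^{1,\tau}$ and the classical Payne--Philippin argument (which differentiates the equation) is not directly available. The paper handles this by citing Sakaguchi's regularization $-\mathrm{div}\big((\varepsilon+|\nabla\phi_\varepsilon|^2)^{(p-2)/2}\nabla\phi_\varepsilon\big)=1$, for which the maximum principle holds classically, and then passing to the limit $\varepsilon\to 0$ using the uniform convergence $\phi_\varepsilon\to\psi_\Omega$. Your remark that ``I do not expect a real obstacle here'' glosses over precisely this regularity issue; it is not a flaw in the strategy, but you should acknowledge the degeneracy and indicate the regularization step to make the proof complete.
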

\begin{proof} By Theorem \ref{teorema de payne e philippin}, $\Phi$ takes its maximum value
at a point where $\nabla \psi_\Omega=0$. So, it follows from (\ref{epp}) that
\begin{equation*}
2\frac{p-1}{p}|\nabla \psi_\Omega|^p+2\psi_\Omega \leq  2\|\psi_\Omega\|_\infty.
\end{equation*}
Therefore
\begin{equation*}
|\nabla \psi_\Omega|^p \leq \frac{p}{p-1} \left(\|\psi_\Omega\|_\infty -\psi_\Omega(x)
\right) \leq \frac{p}{p-1} \|\psi_\Omega\|_\infty  = q \|\psi_\Omega\|_\infty, \ \forall\
x \in \Omega,
\end{equation*}
thus producing
\begin{equation*}
|\nabla \psi_\Omega| \leq \left( q \|\psi_\Omega\|_\infty\right)^{\frac{1}{p}}, \
\forall\ x \in \Omega.
\end{equation*}

But the $p$-Laplacian is degenerated at the origin. So, in order to estimate the quotient (\ref{quo}), a regularization of $-\Delta_p$ is done by considering, as in Sakaguchi \cite{SAKAGUCHI}, the problem
\begin{equation*}
{\displaystyle \left\{\begin{array}{rcll}
- \textrm{div}((\varepsilon + |\nabla \phi_\varepsilon|^2)^{\frac{p-2}{2}}\nabla \phi_\varepsilon ) &=& 1 &\mbox{in} \ \Omega, \\
\phi_\varepsilon& =& 0 &\mbox{on} \ \partial\Omega.\\
\end{array}\right.}\end{equation*}
Sakaguchi proves that the solution $\phi_\varepsilon$ converges uniformly to $\psi_\Omega$ as $\epsilon\to 0$.
The regularization permits us to estimate (\ref{quo}) in the case of the torsional creep problem  (\ref{torsionalcreep}):
\begin{equation*}
\frac{\|\nabla \psi_\Omega\|_\infty }{\| \psi_\Omega\|_\infty}\leq  \frac{q^{\frac{1}{p}}}
{\|\psi_\Omega\|_\infty^{1-\frac{1}{p}}}= \frac{q^{\frac{1}{p}}}
{\|\psi_\Omega\|_\infty^{\frac{1}{q}}}.
\end{equation*}

\vspace*{-.7cm}\end{proof}\vspace{.5cm}

An immediate consequence of Lemma \ref{lema quociente} is an
estimate of the quotient (\ref{quo}) in the case $\Omega=\Omega_2$: we have


\begin{equation}\label{quoc Omega2}
\frac{\|\nabla \phi_\Omega\|_\infty}{\|\phi_\Omega\|_\infty}  \leq
\frac{\left( q \|\omega\|_\infty
\right)^\frac{1}{p}}{\|\phi_\Omega\|_\infty^\frac{1}{q}}.
\end{equation}

 In fact, if $\psi_\Omega$ is a solution of the torsional creep problem (\ref{torsionalcreep}), then
$$\phi_\Omega=\|\omega\|_\infty ^{\frac{1}{p-1}}\psi_\Omega$$
is a solution of (\ref{pss}). So,
\begin{equation}
\frac{\|\nabla \phi_\Omega\|_\infty}{\|\phi_\Omega\|_\infty}= \frac{\|\nabla
\psi_\Omega\|_\infty}{\|\psi_\Omega\|_\infty} \leq
\frac{q^{\frac{1}{p}}}{\|\psi_\Omega\|_\infty^{\frac{1}{q}}}=\frac{q^{\frac{1}{p}}}{\|\phi_\Omega\|_\infty^{\frac{1}{q}}}
\|\omega\|_\infty ^{\frac{1}{p}} = \frac{{(q \|\omega\|_\infty
)}^{\frac{1}{p}}}{\|\phi_\Omega\|_\infty^{\frac{1}{q}}}.
\end{equation}

We observe that the quotient (\ref{quo}) was controlled for any convex domain $\Omega_2\supset\Omega$.

As in the Subsection \ref{SSRadial}, let $B_{r_*}$ be a ball with larger radius such that $B_{r_*}\subset\Omega$. We consider the solution $\phi_*$ of the problem
\begin{equation*}{\displaystyle \left\{\begin{array}{rcll}
-\Delta_p \phi_*&=& \|\omega\|_\infty &\mbox{in} \ B_{r_*}, \\
\phi_*& =& 0 &\mbox{on} \ \partial B_{r_*}.\\
\end{array}\right.}
\end{equation*}
Since $B_{r_*}\subset \Omega$, from the comparison principle follows that
\[\|\phi_*\|_\infty\leq \|\phi_\Omega\|_\infty.\]

But
\begin{eqnarray*}
\|\phi_*\|_\infty&=&\int_0^{r_*}\left(\int_0^\theta \left(\frac{s}{\theta}\right)^{N-1}\|\omega\|_\infty\,ds\right)^{\frac{1}{p-1}}d\theta\\
&=&\left(\frac{\|\omega\|_\infty}{N}\right)^{\frac{1}{p-1}}\int_0^{r_*}\theta^{\frac{1}{p-1}}\,d\theta=
\left(\frac{\|\omega\|_\infty}{N}\right)^{\frac{q}{p}}\frac{r_*^q}{q},
\end{eqnarray*}
thus yielding
\[\frac{\|\nabla \phi_\Omega\|_\infty}{\|\phi_\Omega\|_\infty}\leq \frac{(q\|\omega\|_\infty)^{\frac{1}{p}}}{\|\phi_\Omega\|_\infty^{\frac{1}{q}}}\leq \frac{q^{\frac{1}{p}+\frac{1}{q}}}{r_*}\|\omega\|_\infty^{\frac{1}{p}}
\left(\frac{N}{\|\omega\|_\infty}\right)^{\frac{1}{p}}=\sqrt[p]{N}\frac{q}{r_*}.\]

We now choose $\rho$ given by
\[\rho=\frac{r_*}{q\sqrt[p]{N}}=\frac{p-1}{p\sqrt[p]{N}}r_*\ (< r_*).\]

Then, 
we have
\[\frac{\|\nabla\phi_\Omega\|_\infty}{\|\phi_\Omega\|_\infty}\leq\frac{1}{\rho}=\gamma_{\rho}.\]

In the special case $\omega\equiv 1$, we can take $\rho$ such that
\[\frac{q}{\rho}=\frac{q\sqrt[p]{N}}{r_*},\]
since $\gamma_\rho=q/\rho$.

Thus, we have
\[\rho=\frac{r_*}{\sqrt[p]{N}}<r_*\]
and
\[\frac{\|\nabla \phi_\Omega\|_\infty}{\|\phi_\Omega\|_\infty}\leq \frac{q}{\rho}=\gamma_{\rho}.\]

\section{Examples}\label{Ex}
In this section, we start by studying the problem
\begin{equation} \label{ex1}
{\displaystyle \left\{\begin{array}{rcll}
-\Delta_pu &=& \lambda u(x)^{q-1}(1+|\nabla u(x)|^p) &\mbox{em} \ \Omega, \\
u& =& 0 &\mbox{em} \ \partial\Omega,\\
\end{array}\right.}\end{equation}
where $\Omega$ is a smooth, bounded domain in $\mathbb{R}^N$, $1<q<p$, and $\lambda$ a positive parameter. Problem (\ref{ex1}) is sublinear at the origin.

The solution of this example will permit us to solve
\begin{equation*}
{\displaystyle \left\{\begin{array}{rcll}
-\Delta_pu &=& \lambda f(u,|\nabla u(x)|) &\mbox{em} \ \Omega, \\
u& =& 0 &\mbox{em} \ \partial\Omega,\\
\end{array}\right.}\end{equation*}
for a class of nonlinearities $f$ that are superlinear both at the origin and at $+\infty$.

Results of Huang (\cite{HUANG}) guarantee the unicity of solutions in $$\displaystyle{\Gamma_q=\left\{ u \in W_{0}^{1,p}(\Omega): \int_\Omega |u|^q=1\right\}}$$ for the problem
\begin{equation*} {\displaystyle \left\{\begin{array}{rcll}
-\Delta_pu &=& \lambda u(x)^{q-1} &\mbox{in} \ \Omega, \\
u& =& 0 &\mbox{on} \ \partial\Omega.\\
\end{array}\right.}\end{equation*}

In Montenegro and Montenegro (\cite{Montenegros}) degree theory and the method of sub- and supersolutions are combined to present conditions for existence and nonexistence of weak, positive solutions for the problem
\begin{equation*}\label{prob montenegro}
{\displaystyle \left\{\begin{array}{rcll}
-\Delta_p u &=& \frac{a}{1+ku}|\nabla u|^p  +b(1+ku)^{p-1}&\mbox{in} \ \Omega, \\
u& =& 0 &\mbox{on} \ \partial\Omega,\\
\end{array}\right.}
\end{equation*}
where $a$ e $b$ are positive constants and $k \geq 0$. 

Also, in Iturriaga, Lorca and Sanchez (\cite{iturriaga sanchez lorca}) no qual os autores consideram o problema

\begin{equation}\label{prob iturriaga}
{\displaystyle \left\{\begin{array}{rcll}
-\Delta_p u &=& \lambda f(x,u)+|\nabla u|^p &\mbox{in} \ \Omega, \\
u& =& 0 &\mbox{on} \ \partial\Omega,\\
\end{array}\right.}
\end{equation}
where $\lambda$ is a positive parameter and $f(x,u)$ a Caratheodory function such that
$$c_0 u^{q-1} \leq f(x,u) \leq c_1 u^{q-1}, \ \textrm{para todo} \ (x,t) \in \overline{\Omega} \times [0,\infty)$$
for positive constants $c_0,c_1$. However, the problem was solved by applying a change of variables that transforms (\ref{prob iturriaga}) into a problem that does not depend on the gradient.

Under similar but different hypotheses, problem (\ref{ex1}) was solved in \cite{HGZ2}. Here, we show that this problem has a positive solution for each $\lambda\in (0,\lambda^*]$, where $\lambda^*$ will be given in the sequence. To solve problem (\ref{ex1}) we consider, as in Subsection \ref{SSRadial}, $B_\rho$ as the largest open ball contained in $\Omega$ and $B_R$ such that $\Omega \subseteq B_R$.

\begin{remark}\rm The nonlinearity $\lambda u(x)^{q-1}(1+|\nabla u(x)|^p)$ is related to the weight function $\omega(x) \equiv 1$. So, the constants in hypotheses (H1) e (H2) are given by
\begin{equation}\label{k1 BR}k_1:=k_1(B_R)=\|\phi_R\|_{\infty}^{-(p-1)}=\left(\frac{p-1}{p}\right)^{1-p} {N}R^{-p},\end{equation}

\begin{equation}\label{k2 B_rho}
{\displaystyle
k_2:= k_2(B_\rho)=
\left\{\begin{array}{rcll}
\displaystyle{ \left[
\frac{p-1}{p} \left(\frac{p}{N}\right)^{\frac{N}{N-p}} \right]^{1-p} \frac{N}{\rho^{p}}}, &\mbox{if} \ N \neq p, \\
\\
\displaystyle{ \left(\frac{p-1}{e p}\right)^{1-p} \frac{p}{\rho^p}},   &\mbox{if} \ N=p,\\
\end{array}\right.}
\end{equation}
and
\begin{equation}\gamma = \gamma_\rho =\frac{p}{p-1} \frac{1}{\rho}.\end{equation}

From now on, $k_1$ and $k_2$ denote the constants (\ref{k1 BR}) and (\ref{k2 B_rho}), respectively. According to Lemma \ref{lema comparacao entre k1 de omega e k2 da bola}, we have $k_1<k_2$.
$\hfill\lhd$\end{remark}

Of course, the nonlinearity $\lambda u(x)^{q-1}(1+|\nabla u(x)|^p)$ satisfies (H3) for any value of $\lambda$. We will obtain constants $\delta,M$ (with $0<\delta<M$) such that hypotheses (H1) and (H2) of Theorem \ref{maint} are verified.

To satisfy (H1), $M$ must be chosen such that
\begin{equation}\label{cond para valer H1}
\lambda M^{q-1}(1+(\mu M)^p)\leq \alpha M^{p-1}.\end{equation}

So, defining the function $H\colon [0,\infty)\to [0,\infty]$ by $H(M)=M^{q-p}(1+\mu^p M^p)$, the last inequality is equivalent to
\[H(M)\leq \frac{\alpha}{\lambda}.\]
We have \[\lim_{M\to 0^+}H(M)=\infty=\lim_{M\to\infty}H(M),\]
and the function $H$ has a unique critical point $M_*$, given by
\[\mu^p M^p_*=\frac{p}{q}-1,\]
where $H$ assumes its minimum value
\[H(M_*)=M^{q-p}(1+\mu^pM_*^p)=\frac{1}{\mu^{q-p}}\left(\frac{p}{q}-1\right)^{\frac{q-p}{p}}\left(\frac{p}{q}\right).\]

\begin{figure}[tbh]
\begin{center}
{\includegraphics[height=4.5cm]{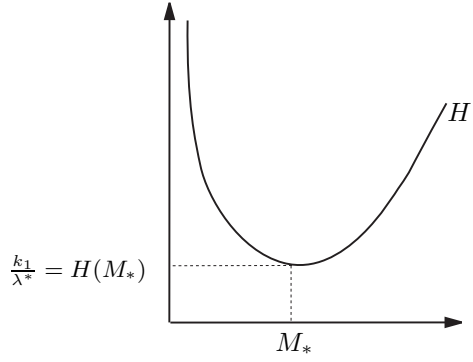}}
\begin{picture}(0,0)
\put(-12,80){$H$} \put(-77,-7){$M_*$} \put(-178,21){\small{$\frac{k_1}{\lambda^*}= H(M_*)$}}
\end{picture}
\label{figf}
\end{center}
\caption{The graph of $H$ assumes it minimum at the point $M^*$.}%
\end{figure}

Taking $M := M_*$ in (\ref{cond para valer H1}) and defining
\begin{equation}\label{lambda_*}
\lambda^*=\frac{k_1}{H(M_*)},
\end{equation}
hypothesis (H1) is verified for any $0 < \lambda \leq \lambda^*$. The choice $M=M_*$ makes $\lambda^*$ to be the best possible value of the parameter such that Theorem \ref{maint} guarantees the existence of a positive solution for problem (\ref{ex1}).

Now, for any fixed $\lambda\in (0,\lambda^*]$, we try to verify (H2). More precisely, we look for $\delta_\lambda:=\delta(\lambda)$ such that
\[\lambda u^{q-1}(1+|\nabla u|^p)\geq k_2\delta_\lambda^{p-1},\quad \delta_\lambda\leq u\leq M_*,\ \ 0\leq |\nabla u|\leq \gamma M_*.\]

For this, we consider the function $G\colon(0,\infty) \rightarrow [0,\infty)$ given by
\begin{equation}\label{G} G(x)=x^{q-p}.
\end{equation}

We clearly have $G(x) \leq H(x)$ for any $x \in (0, \infty)$ and (H2) is verified if
\begin{equation} \label{H aux}
\lambda G(\delta_\lambda) \geq k_2,\end{equation}
that is,
\begin{equation} \label{H aux2}
\delta_\lambda\leq \left( \frac{\lambda}{k_2}\right)^{\frac{1}{p-q}}.\end{equation}

\begin{figure}[htb]
\begin{center}
{\includegraphics[height=4.5cm]{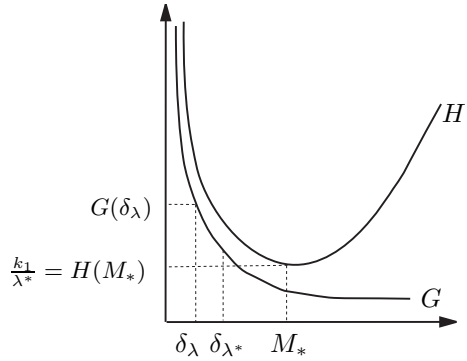}}
\begin{picture}(0,0)
\put(-12,80){$H$}
\put(-20,10){$G$}
\put(-77,-7){$M_*$}
\put(-113,-7){$\delta_\lambda$}
\put(-100,-7){$\delta_{\lambda^*}$}
\put(-176,21){\small{$\frac{k_1}{\lambda^*}=H(M_*)$}}
\put(-145,45){\small{$
G(\delta_\lambda)$}}
\end{picture}
\label{figfg}
\end{center}
\caption{The graphs of $H$ and $G$.}%
\end{figure}

So, for any $\lambda\in (0,\lambda^*]$, (H2) is satisfied if we take $\delta_\lambda>0$ verifying the above inequality. Observe that the same value of $\delta_\lambda$ is valid for any $\tilde\lambda\in [\lambda,\lambda^*]$.

Since $0<\lambda\leq \lambda^*$, the largest value of $\delta_\lambda$ is attained at $\lambda^*$. So, the condition  $\delta_\lambda<M_*$ always holds:
\[\delta_\lambda\leq\left(\frac{\lambda^*}{k_2}\right)^{\frac{1}{p-q}}\leq \left(\frac{\lambda^*}{k_1}\right)^{\frac{1}{p-q}}
=\left(\frac{1}{H(M_*)}\right)^{\frac{1}{p-q}}
=M_*\left(\frac{q}{p}\right)^{\frac{1}{p-q}}<M_*.\]

\begin{remark}\label{obs theta} \rm The existence of positive solutions for the problem
\begin{equation}
{\displaystyle \left\{\begin{array}{rcll}
-\Delta_pu &=& \lambda \ \omega(x) u(x)^{q-1}(1+|\nabla u(x)|^p) &\mbox{in} \ \Omega, \\
u& =& 0 &\mbox{on} \ \partial\Omega,\\
\end{array}\right.}\end{equation}
where $\Omega\subset\mathbb{R}^N$ is a bounded, smooth domain and $1<q<p$ follows analogously for any continuous weight
function $\omega\colon \overline{\Omega} \rightarrow \mathbb{R}$. We can also change $p$ for any value $0<\theta<p$.\end{remark}

The main advantage of the method that leads to Theorem \ref{maint} are the hypotheses (H1) and (H2).
\begin{figure}[tbh]
\begin{center}
{\includegraphics[height=4.5cm]{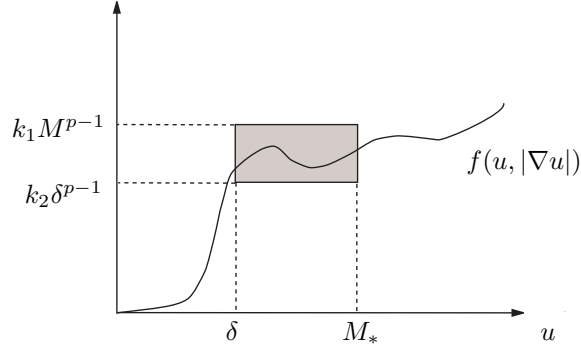}}
\begin{picture}(0,0)
\put(-45,60){$f(u, |\nabla u|)$}
 \put(-212,47){$k_2\delta^{p-1}$}
\put(-217,72){$k_1M^{p-1}$} \put(-136,-5){$\delta$}
 \put(-17,-5){$u$}\put(-92,-5){$M_*$}
\end{picture}
\label{figura1: box}
\end{center}
\caption{Example of a nonlinearity $f$, superlinear at the origin and satisfying (H1) e (H2). O graph illustrates the perspective $|\nabla u|$= constant.}
\end{figure}

With the exception of (H3), no other assumption on the nonlinearity $f$ is necessary. So, $f$ can be superlinear both at the origin and at $+\infty$.

\begin{remark} \rm  The radial problem
\begin{equation}\label{ex anel}
\displaystyle \hspace*{-.2cm}\left\{\!\!\!\!\begin{array}{rcll}
-\Delta_pu\!\!&=&\!\! \lambda \ \omega(|x|) u(x)^{r}(1+|\nabla u(x)|^\theta), &\mbox{if} \ R_1 < |x| < R_2, \\
u\!\!& =&\!\! 0, &\mbox{if} \ |x|=R_1 \ \mbox{or} \ |x|=R_2,\\
\end{array}\right.
\end{equation}
where $\lambda=1$, $r>p-1$, $0 \leq \theta \leq p$ and $\omega\colon [R_1, R_2] \rightarrow [0, \infty)$ is a continuous function not equal zero was considered in \cite{figubilla}. In that work, the existence of a positive solution for (\ref{ex anel}) was obtained as a consequence of Krasnosel'skii Fixed Point Theorem for mappings defined in cones.

Our result complements those obtained in \cite{figubilla} and guarantees the existence of a solution for (\ref{ex anel}) in the case $0<r<p-1$ and $0< \theta \leq p$.

In fact, if $\lambda \leq \lambda^*$, the existence of a positive solution follows from Theorem \ref{maint}. Since $\omega\neq 0$, there exists $B_\rho$ contained in the domain where $\omega > 0$. In $B_\rho$ we obtain a subsolution  $\underline{u}$ of (\ref{ex anel}).

The inclusion of the parameter $\lambda$ is necessary because of the hypotheses (H1) and (H2). In the particular case  $\lambda=1$, the existence of solution is obtained only if $1 \leq \lambda^*$.
\end{remark}

\section{Appendix}
In this appendix we give a direct prove that the radial operator $T$ of Section \ref{SRad} is continuous and compact.

In fact, compactness of $T$ can be obtained from Arzel\'{a}-Ascoli and
Dominated Convergence Theorems.

Let $\left\{u_m \right\}_{m\in \mathbb{N}}$ a bounded sequence in
$X$, $\|u_m\| \leq M$. Is this case, $\left\{Tu_m\right\}$ and
$\left\{(Tu_m)'\right\}$ are bounded in $X$ and uniformly
equicontinuous.


In fact, let $C=\displaystyle{\sup_{0\leq t,s \leq M}f(s,t)}$. Since $K(s,\theta)=\left(\frac{s}{\theta}\right)^{N-1}\omega_\rho(s)\leq\omega_\rho(s)$ and $0\leq\theta \leq \rho$, we have

$$Tu_m(r) \leq \int_0^\rho \varphi_q \left(C\int_0^\theta K(s,\theta)\,ds\right) d\theta \leq \rho\varphi_q \left( C \int_0^\rho \omega_\rho(s)ds\right),$$ showing that $Tu_m(r)$ is
uniformly bounded in the sup norm.

We also have,
\begin{eqnarray*}|(Tu_m)'(r)|&=&\varphi_q\left(\int_0^rK(s,r)f(u_m(s),|\nabla u_m(s)|)\,ds\right)\\
&\leq&  \varphi_q\left(C \int_0^\rho \omega_\rho(s)ds\right),
\end{eqnarray*}
proving that $\left\{ Tu_m\right\}$ is equicontinuous.

Let us prove that $\left\{(Tu_m)'\right\}$ is equicontinuous.
Deriving $(Tu_m)'(r)$ we have

\[|(Tu_m)''(r)|=\!\left(\!\frac{1}{p-1}\right)\!\!\left[|v'_m(r)|^{2-p}
w_\rho(s) f(u_m(s), |u'_m(s)|)+ \left(\!\frac{n-1}{r}\!\right)|v'_m(r)|
\right],\]
where $v_m=(Tu_m)'$.

If $1<p \leq 2$, the right-hand side of the last equality is bounded, thus $(Tu_m)'$ is Lipschitz-continuous and, consequently,
equicontinuous.

If $p>2$, $|(Tu_m)'(r)|$ is H\"{o}lder continuous with exponent $\frac{1}{p-1}$ (and
consequently $|(Tu_m)(r)|$ is equicontinuous). In fact,
$(Tu_m)'(r)= (\varphi_q \circ \lambda_m)(r)$, where
$$\varphi_q(x)=x^{\frac{1}{p-1}}$$
is a H\"{o}lder continuous function and
$$\lambda_m(r):=\int_0^r K(s,r)f(u_m(s),|\nabla u_m(s)|)\,ds.$$ We claim that $\lambda_m$ is locally Lipschitz continuous, uniformly on $m$. For this, we note that
$\lambda_m \in C^1([0,R])$ with
$$\displaystyle{\lim_{r\rightarrow 0^+}\lambda_m(r)=0}$$
and

$$\displaystyle{\lim_{r\rightarrow 0^+}\lambda_m'(r)=\frac{\omega(0) f(u_m(0),|u'_m(0)|)}{N}}.$$
Therefore, the Mean Value Theorem guarantees the existence of $L>0$ such that

$$|\lambda_m(r) - \lambda_m(t)| \leq L|r-t|,$$
proving our claim.

Since $(Tu_m)'(r)= \varphi_q \circ \lambda_m(r)$ we can conclude the
equicontinuity of $\left\{ (Tu_m)'\right\}$

\begin{align*}
\left|(Tu_m)'(r)-(Tu_m)'(t)\right|&\leq|\varphi_q(\lambda_m(r))-\varphi_q(\lambda_m(t))|\\
&\leq|\lambda_m(r)-\lambda_m(t)|^{\frac{1}{p-1}} \leq
L|r-t|^{\frac{1}{p-1}}.
\end{align*}


We also note that, if $\left\{u_m \right
\}_{m \in \mathbb{N}}$ converges uniformly to $u$ in $[0,R]$, then $Tu_{m_j} \rightarrow Tu$ for all the subsequences $\left\{u_{m_j}\right \}$  of $\left\{u_{m}\right\}$, by the Dominated Convergence Theorem. From this follows that $T$ is continuous. $\hfill\Box$



\begin{thebibliography}{99}

\bibitem{Anane}A. Anane, Etude des Valeurs Propres et de la R\'{e}sonnance Pour l'Op\'{e}rateur p-Laplacien, Th. Doc., Universit\'{e} Libre de Bruxelles, 1987.

\bibitem{Azizieh}C. Azizieh and P. Cl\'{e}ment, {\it A priori estimates and continuation methods for positive solutions of p-Laplace equations}, {J. Diff. Eqs.} \textbf{179} (2002), 213-245.

\bibitem{BMP} L. Boccardo, F. Murat and J.-P. Puel, {\it R\'{e}sultats d'existence pour
certains probl\`{e}mes elliptiques quasilin\'{e}aires}. Ann.
Scuola Norm. Sup. Pisa Cl. Sci. (4) {\bf{11}} (1984), 213-235.

\bibitem{HGZW} H. Bueno, G. Ercole, W. Ferreira and A. Zumpano, \textit{Existence
and multiplicity of positive solutions for the p-Laplacian with
nonlocal coefficient}, J. Math. Anal. Appl. {\bf{343}} (2008),
151-158.

\bibitem{HGZ} H. Bueno, G. Ercole and A. Zumpano, \textit{Positive solutions for
the p-Laplacian and bounds for its first eigenvalue}, Advanced
Nonlinear Studies {\bf{9}} (2009), 313-338.

\bibitem{HGZ2} H. Bueno, G. Ercole, A. Zumpano, \textit{Existence of positive solution for a quasilinear problem depending on the gradient}, submited for publication.

\bibitem {mabel}{\sc M. Cuesta Leon}, \textit{Existence results for quasilinear problems via ordered sub and supersolutions}, Ann. Fac. Sci. Toulouse (6), \textbf{6} (1997), no 4, 591-608.

\bibitem{DAMASCELLI} L. Damascelli, \textit{Comparison theorems for some
quasilinear degenerate elliptic operators and applications to
symmetry and monotonicity results}, Ann. Inst. Henri Poincar\'{e}.
{\bf{15}}, nº 4 (1998), 493-516.




\bibitem{GZ} G. Ercole and A. Zumpano, \textit{Existence of positive radial
solutions for the n-dimensional p-Laplacian}, Nonlinear Analysis TMA
{\bf{44}} (2001), 355-360.

\bibitem{DJAIRO} D. de Figueiredo, M. Girardi and M. Matzeu, \textit{Semilinear elliptic equations with dependence
on the gradient via mountain-pass techniques}, Differential and
Integral Equations {\bf{17}} (2004), 119-126.

\bibitem{figubilla} D. de Figueiredo, J. S\'{a}nchez, P. Ubilla,
\textit{Quasilinear equations with dependence on the gradient}, Nonlinear Analysis TMA
    {\bf{71}} (2009), 4862-4868.

\bibitem{GHERGU RADULESCU1} G. Ghergu and V. R$\breve{a}$dulescu, {\it Ground state solutions for the singular Lane-Emden-Fowler equation with sublinear convection term}, {J. Math. Anal. Appl.}
\textbf{333} (2007), 265-273.

\bibitem{JOSE VALDO1} J. V. Gon\c{c}alves and F. K. Silva, {\it Existence and nonexistence of ground state solutions for elliptic equations with a convection term}, Nonlinear Analysis
{\bf{72}} (2010), 904-915.
\bibitem{Grenon} N. Grenon, {\it Existence and Comparison Results for Quasilinear Elliptic Equations with Critical Growth in the Gradient}, {J. Diff. Eqs.} \textbf{171} (2001), 1-23.

\bibitem{HUANG} Y. X. Huang, \textit{A note on the asymptotic behavior of positive solutions for some elliptic equation}, {Nonliner Analysis} 
    \textbf{29} (1997), no 3, 533-537.

\bibitem{iturriaga}L. Iturriaga and S. Lorca, \textit{Existence and multiplicity results for
degenerate elliptic equations with dependence on the gradient},
Boundary Value Problems \textbf{2007}, Art. ID 47218, 12 pp.

\bibitem{iturriaga sanchez lorca} L. Iturriaga, S. Lorca and J. S\'{a}nchez, \textit{Existence and multiplicity results for the p-Laplacian with a p-gradient term}, NoDEA Nonlinear Diff. Equations Appl. \textbf{15} (2008), no 6, 729-743.

\bibitem{KAWOHL} B. Kawohl, {\it On a family of torsional creep problems},
J. Reine Angew. Math., {\bf{410}} (1990), 1-22.

\bibitem{KURA} T. Kura, \textit{The weak supersolution-subsolution method
for second order quasilinear elliptic equations}, J. Hiroshima
Math {\bf{19}} (1989), 1-36.

\bibitem{LEON} M. C. Leon, {\it Existence results for quasilinear problems
via ordered sub and supersolutions}, Annales de la Facult\'{e} des
Sciences de Toulouse {\bf{6}}, no. 4 (1997), 591-608.

\bibitem{LIEBERMAN} G. M. Lieberman, {\it Boundary regularity for solutions of
degenerate elliptic equations}, Nonlinear Analysis TMA {\bf{12}} (1988), 1203-1219.


\bibitem{LOU} H. Lou, \textit{On singular sets of local solutions to $p$-Laplace equations}, Chinese Annals of Mathematics,
Series B, {\bf{29B(5)}} (2008), 521-530.

\bibitem{Montenegros} M. Montenegro and M. Montenegro, {\it Existence and nonexistence of solutions for quasilinear elliptic equations}, {J. Math. Anal. Appl.}
\textbf{245} (2000), 303-316.


\bibitem{PAYNE AND PHILIPPIN} L. E. Payne and G. A. Philippin, {\it Some maximum principles
for nonlinear elliptic equations in divergence form with
applications to capilarity surfaces and to surfaces of constant
mean curvature}, Nonlinear Analysis TMA {\bf{3}}, no. 2 (1979),
193-211.

\bibitem{RUIZ} D. Ruiz, \textit{A priori estimates and existence of positive solutions
for strongly nonlinear problems}, {J. Diff. Eqs.} {\bf{199}}
(2004), 96-114.

\bibitem{SAKAGUCHI} S. Sakaguchi, \textit{Concavity properties of solutions to some degenerated
quasilinear elliptic Dirichlet problems}, Ann. Scuola
Norm. Sup. Pisa Cl. Sci. (4) {\bf{14}}, no. 3 (1987), 403-421.


\bibitem{Tolksdorf}P. Tolksdorf, {\it Regularity for a more general class of quasilinear elliptic equations}, {J. Diff. Eqs.} \textbf{51} (1984), 126-150.


\end{thebibliography}
\end{document}